\newcommand{\nc}{\newcommand}
\nc{\les}{\lesssim}
\nc{\nit}{\noindent}
\nc{\nn}{\nonumber}
\nc{\D}{\partial}
\nc{\diff}[2]{\frac{d #1}{d #2}}
\nc{\diffn}[3]{\frac{d^{#3} #1}{d {#2}^{#3}}}
\nc{\pdiff}[2]{\frac{\partial #1}{\partial #2}}
\nc{\pdiffn}[3]{\frac{\partial^{#3} #1}{\partial{#2}^{#3}}}
\nc{\abs}[1] {\lvert #1 \rvert}
\nc{\cAc}{{\cal A}_c}
\nc{\cE}{{\cal E}}
\nc{\cF}{{\cal F}}
\nc{\cP}{{\cal P}}
\nc{\cV}{{\cal V}}
\nc{\cQ}{{\cal Q}}
\nc{\cGin}{{\cal G}_{\rm in}}
\nc{\cGout}{{\cal G}_{\rm out}}
\nc{\cO}{{\cal O}}
\nc{\Lav}{{\cal L}_{\rm av}}
\nc{\cL}{{\cal L}}
\nc{\cB}{{\cal B}}
\nc{\cZ}{{\cal Z}}
\nc{\cR}{{\cal R}}
\nc{\cT}{{\cal T}}
\nc{\cY}{{\cal Y}}
\nc{\cX}{{\cal X}}
\nc{\cXT}{{{\cal X}(T)}}
\nc{\cBT}{{{\cal B}(T)}}
\nc{\vD}{{\vec \mathcal{D}}}
\nc{\efield}{\mathcal{E}}
\nc{\mE}{\mathcal{E}}
\nc{\vE}{{\vec \efield}}
\nc{\vB}{{\vec \mathcal{B}}}
\nc{\vH}{{\vec \mathcal{H}}}
\nc{\F}{  \mathcal{F} }
\nc{\ty}{{\tilde y}}
\nc{\tu}{{\tilde u}}
\nc{\tV}{{\tilde V}}
\nc{\Pc}{{\bf P_c}}
\nc{\bx}{{\bf x}}
\nc{\bX}{{\bf X}}
\nc{\bXYZ}{{\bf XYZ}}
\nc{\bY}{{\bf Y}}
\nc{\bF}{{\bf F}}
\nc{\bS}{{\bf S}}
\nc{\dV}{{\delta V}}
\nc{\dE}{{\delta E}}
\nc{\TT}{{\Theta}}
\nc{\dPsi}{{\delta\Psi}}
\nc{\order}{{\cal O}}
\nc{\Rout}{R_{\rm out}}
\nc{\eplus}{e_+}
\nc{\eminus}{e_-}
\nc{\epm}{e_\pm}
\nc{\eps}{\varepsilon}
\nc{\vnabla}{{\vec\nabla}}
\nc{\G}{\Gamma}
\nc{\w}{\omega}
\nc{\mh}{h}
\nc{\mg}{g}
\nc{\vphi}{\varphi}
\nc{\tlambda}{\tilde\lambda}
\nc{\be}{\begin{equation}}
\nc{\ee}{\end{equation}}
\nc{\ba}{\begin{eqnarray}}
\nc{\ea}{\end{eqnarray}}
\nc{\g}{\gamma}
\nc{\ol}{\overline}
\newtheorem{theorem}{Theorem}[section]
\newtheorem{lemma}[theorem]{Lemma}
\newtheorem{prop}[theorem]{Proposition}
\newtheorem{defin}[theorem]{Definition}
\newtheorem{rmk}[theorem]{Remark}
\nc{\pT}{\partial_T}
\nc{\pz}{\partial_z}
\nc{\pt}{\partial_t}
\nc{\la}{\langle}
\nc{\ra}{\rangle}
\nc{\infint}{\int_{-\infty}^{\infty}}
\nc{\halfwidth}{6.5cm}
\nc{\figwidth}{10cm}
\newcommand{\f}{\frac}
\nc{\nlayers}{L} \nc{\nsectors}{M}
\nc{\indicator}{\mathbf{1}}
\nc{\Rhole}{R_{\rm hole}}
\nc{\Rring}{R_{\rm ring}}
\nc{\neff}{n_{\rm eff}}
\nc{\Frem}{F_{\rm rem}}
\nc{\R}{\mathbb R}
\nc{\C}{\mathbb C}
\nc{\Z}{\mathbb Z}
\nc{\DD}{\Delta}
\nc{\cD}{\mathcal D}
\nc{\lnorm}{\left\|}
\nc{\rnorm}{\right\|}
\nc{\rnormp}{\right\|_{\ell^{p,\eps}}}
\nc{\rar}{\rightarrow}
\nc{\mR}{\mathcal R}
\nc{\oo}{\"o}   
\nc{\os}{\overset{o}}
\begin{document}

\begin{abstract}

	We consider the fourth order Schr\"odinger operator $H=\Delta^2+V$ and show that if there are no eigenvalues or resonances in the absolutely continuous spectrum of $H$ that the solution operator $e^{-itH}$ satisfies a large time integrable $|t|^{-\f54}$ decay rate between weighted spaces.  This bound improves what is possible for the free case in two directions; both better time decay and smaller spatial weights.  In the case of a mild resonance at zero energy, we derive the operator-valued expansion $e^{-itH}P_{ac}(H)=t^{-\f34} A_0+t^{-\f54}A_1$ where $A_0:L^1\to L^\infty$ is an operator of rank at most four and $A_1$ maps between polynomially weighted spaces.  

\end{abstract}

\title[Weighted dispersive estimates for fourth order Schr\"odinger]{Time integrable weighted dispersive estimates for the fourth order Schr\"odinger equation in three dimensions}

\author[Goldberg, Green]{Michael Goldberg and William~R. Green}
\thanks{The  first  author  is  supported  by  Simons  Foundation  Grant  635369. The second author is supported by  Simons  Foundation  Grant  511825. }
\address{Department of Mathematics\\
	University of Cincinnati \\
	Cincinnati, OH 45221 U.S.A.}
\email{goldbeml@ucmail.uc.edu}
 \address{Department of Mathematics\\
Rose-Hulman Institute of Technology \\
Terre Haute, IN 47803, U.S.A.}
\email{green@rose-hulman.edu}


\maketitle

\section{Introduction}

In this paper we study the dynamics of the solution operator to the fourth order Schr\"odinger equation
\be \label{eq:4th order}
i\psi_t=(\Delta^2+V)\psi, \quad \psi(0,x)=f(x), \qquad x\in \R^3.
\ee 
Here $V$ is a real-valued decaying potential.
For convenience, we denote the fourth order Schr\"odinger operator $H:=\Delta^2+V$, and $P_{ac}(H)$ is projection onto the absolutely continuous subspace of $L^2(\R^3)$ associated to the self-adjoint operator $H$.

In the free case, when $V=0$, it is well known (c.f. \cite{BKS}) that the solution operator satsfies the dispersive estimate
$$
\|e^{-it\Delta^2} f\|_{L^\infty (\R^3)}\les |t|^{-\f34}\|f\|_{L^1 (\R^3)}.
$$
The notation $A\les B$ means there exists an absolute constant $C$ so that $|A|\leq C|B|$.  In \cite{egt}, it was shown that the solution operator $e^{-itH}P_{ac}(H)$ for \eqref{eq:4th order} satisfies the same dispersive bound as above for a generic class of short range perturbations.  In this paper we show that a generic perturbation can lead to faster time decay.  
\begin{theorem}\label{thm:reg}
	If zero is a regular point of the spectrum of $H$, and there are no embedded eigenvalues in the spectrum of $H$, then we have the following
	$$
	\|e^{-itH}P_{ac}(H)f\|_{L^{\infty,-1} (\R^3)}\les |t|^{-\f54} \|f\|_{L^{1,1} (\R^3)}, \qquad |t|\geq 1
	$$
	provided $|V(x)|\les \la x\ra^{-\beta}$ for some $\beta>7$.
	
\end{theorem}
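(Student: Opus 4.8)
The plan is to prove Theorem \ref{thm:reg} via the standard resolvent/spectral representation adapted to the fourth order operator. Writing $H = \Delta^2 + V$ and using the functional calculus, one has for $|t|\ge 1$
\be
e^{-itH}P_{ac}(H) = \frac{1}{2\pi i}\int_0^\infty e^{-it\lambda}\big[R_V^+(\lambda)-R_V^-(\lambda)\big]\,d\lambda,
\ee
where $R_V^\pm(\lambda) = (\Delta^2+V-\lambda\mp i0)^{-1}$ are the boundary values of the resolvent on the positive real axis. A change of variables $\lambda = \mu^4$ turns this into an oscillatory integral in $\mu$ with weight $\mu^3\,d\mu$ and the fourth-order free resolvent $R_0^\pm(\mu^4)$, which by the factorization $\Delta^2-\mu^4 = (\Delta-\mu^2)(\Delta+\mu^2)$ decomposes into a combination of Schr\"odinger resolvents at energies $\pm\mu^2$. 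First I would record the symmetric resolvent identity $R_V^\pm(\mu^4) = R_0^\pm - R_0^\pm v\,(M^\pm(\mu))^{-1} v\,R_0^\pm$ with $v = |V|^{1/2}$, $U=\mathrm{sign}\,V$, and $M^\pm(\mu) = U + v\,R_0^\pm(\mu^4)\,v$, and then split the $\mu$-integral with a smooth cutoff into a low-energy piece ($\mu$ near $0$) and a high-energy piece ($\mu$ bounded away from $0$).

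For the high-energy part I would rely on the absence of embedded eigenvalues together with a limiting absorption principle and the usual high-energy resolvent expansion (following the arguments in \cite{egt}): here the oscillatory factor $e^{-it\mu^4}$ has a stationary-phase gain, and integrating by parts in $\mu$ exploiting the decay $|V(x)|\lesssim\langle x\rangle^{-\beta}$ with $\beta>7$ yields a bound of size $|t|^{-N}$ for any $N$, hence certainly $|t|^{-5/4}$, between the stated weighted spaces. The heart of the matter is the low-energy analysis. Since zero is a regular point, $M^\pm(\mu)$ is invertible near $\mu=0$ and admits a Neumann-type expansion in powers of $\mu$; substituting the expansions of $R_0^\pm(\mu^4)$ (whose kernel involves terms like $\frac{e^{i\mu|x-y|}}{|x-y|}$ and $\frac{e^{-\mu|x-y|}}{|x-y|}$, so expandable as $c_0 + c_1\mu + c_2\mu^2|x-y| + \dots$) into the identity, one obtains an asymptotic expansion of $R_V^\pm(\mu^4)$ of the form (leading constant term) $+\,\mu\,(\text{rank-one-type operator})\,+\,O(\mu^2)$-type remainder, where the coefficients are operators with kernels controlled by the weights $\langle x\rangle, \langle y\rangle$.

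The final step is to feed this low-energy expansion into the oscillatory integral $\int_0^\infty e^{-it\mu^4}\mu^3\,[\,\cdots\,]\,\chi(\mu)\,d\mu$. The constant (zeroth order) term cancels between $R_V^+$ and $R_V^-$ up to an error, and the surviving contributions are integrals of the form $\int_0^\infty e^{-it\mu^4}\mu^{3+k}g(\mu)\,d\mu$ with $g$ smooth and compactly supported; by the van der Corput lemma / repeated integration by parts (using $\frac{d}{d\mu}e^{-it\mu^4} = -4it\mu^3 e^{-it\mu^4}$) each such integral is $O(|t|^{-(4+k)/4})$, and the first genuinely nonvanishing term $k=1$ produces exactly the $|t|^{-5/4}$ rate. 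One must check that the operator attached to each such $\mu$-power is bounded from $L^{1,1}$ to $L^{\infty,-1}$: the spatial weights $\langle x\rangle$ on the source and $\langle x\rangle^{-1}$ on the target are precisely what is needed to absorb the linear-in-$|x-y|$ growth coming from the $\mu^2|x-y|$ terms in the expansion of the free resolvent kernel, while the hypothesis $\beta>7$ guarantees that $v\langle\cdot\rangle^{j}$ is Hilbert–Schmidt for enough values of $j$ so that all the operator products and remainder estimates converge. I expect the main obstacle to be the bookkeeping in the low-energy expansion, namely extracting the expansion of $(M^\pm(\mu))^{-1}$ to sufficiently high order and tracking the exact polynomial weight each error term demands, so as to land precisely in $L^{1,1}\to L^{\infty,-1}$ with the claimed $|t|^{-5/4}$ and no loss; showing the remainder term's $\mu$-integral also decays at least at rate $|t|^{-5/4}$ (rather than some slower rate) requires care with the non-smooth $\mu$-dependence of the remainder and a fractional integration-by-parts or interpolation argument.
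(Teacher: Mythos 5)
Your overall architecture matches the paper's: Stone's formula with the change of variables $\lambda\mapsto\lambda^4$, the symmetric resolvent identity with $M^\pm(\lambda)=U+vR^\pm(H_0,\lambda^4)v$, a low/high energy split, a Neumann expansion of $(M^\pm(\lambda))^{-1}$ valid because zero is regular, and integration-by-parts bounds on $\int_0^\infty e^{-it\lambda^4}\lambda^{3+k}\,d\lambda$. But there is a genuine gap at the heart of the low-energy step, precisely where the theorem's improvement lives. After the constant term, the free resolvent expansion has remainder $\mathcal E_1(\lambda)=O_2(\lambda|x-y|^2)$ --- quadratic in $|x-y|$, not the ``linear-in-$|x-y|$'' growth you assert --- so a direct substitution into the Born-type expansion only yields $R_V^+-R_V^-=O_2(\lambda\langle x\rangle^2\langle y\rangle^2)$, hence only an $L^{1,2}\to L^{\infty,-2}$ estimate (the weights of Theorem~\ref{thm:swave} and of the free evolution), not the claimed $L^{1,1}\to L^{\infty,-1}$. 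The paper must exhibit an additional cancellation inside $\mathcal E_1$: writing $\mathcal E_1(\lambda,x,y)=K(\lambda|x-y|)/\lambda$, Lemma~\ref{lem:leading terms} shows that the specific combination $\mathcal E_1 - \big(\mathcal E_1 vPv\mathbf 1+\mathbf 1 vPv\mathcal E_1\big)/\|V\|_1$ produced by the term $g^+(\lambda)R^+(H_0,\lambda^4)vSvR^+(H_0,\lambda^4)$ is $O_2(\lambda\langle x\rangle\langle y\rangle)$, via the Mean Value Theorem bounds \eqref{eq:Kjbound1}--\eqref{eq:Kjbound2} on $K_j(\lambda|x-y|)-K_j(\lambda|x|)-K_j(\lambda|y|)$, together with the orthogonality $R^+(H_0,\lambda^4)vQ=\mathcal E_0(\lambda)vQ$ exploited in Lemmas~\ref{lem:RvQ} and~\ref{lem:E_1vQ}. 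Your proposal never identifies this mechanism, and without it the weight exponent $1$ cannot be reached; you would land at $\sigma=2$.

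A secondary inaccuracy: at high energy you cannot get $|t|^{-N}$ for every $N$. Each integration by parts consumes a $\lambda$-derivative of $R_V(\lambda^4)$, available only through the limiting absorption principle; each derivative costs additional decay of $V$ and produces spatial weights (already the second $\lambda$-derivative of the free resolvent kernel costs a factor $\langle x\rangle\langle y\rangle$). The paper stops at two integrations by parts, obtaining $|t|^{-2}\langle x\rangle\langle y\rangle$, and interpolates with the unweighted $|t|^{-3/4}$ bound from \cite{egt}. That is all the theorem needs, but the arbitrary-order decay you claim is unjustified as stated.
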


Here $\la x\ra:=(1+|x|^2)^{\f12}$. The polynomially weighted $L^p$ spaces are defined by
$$
L^{p,\sigma}(\R^3)=\{ f\,:\, \la \cdot \ra^\sigma f\in L^p(\R^3) \}, \qquad 1\leq p\leq \infty, \quad \sigma \in \R.
$$
This result shows that that one can obtain faster, time-integrable decay if the perturbation $V$ removes the natural zero energy resonance, at the cost of spatial weights.  One can interpolate between the result in Theorem~\ref{thm:reg} and the result in \cite{egt} to show the bound
\be \label{eq:interp bd}
\|e^{-itH}P_{ac}(H)f\|_{L^{\infty,-\sigma} (\R^3)}\les |t|^{-\f34-\f\sigma2} \|f\|_{L^{1,\sigma} (\R^3)}, \qquad |t|\geq 1, \quad 0\leq \sigma \leq 1.
\ee
The condition that zero energy is regular prohibits the existence of solutions to $H\psi=0$ with $\psi \in L^\infty (\R^3)$.  If there is a non-trivial solution with $\psi\in L^\infty(\R^3)$ we say there is a zero energy resonance (or eigenvalue if $\psi \in L^2(\R^3)$), see \cite{egt} for a detailed study of these threshold phenomena.  We say $\psi$ is a resonance of the first kind if $\psi \in L^\infty (\R^3)$ but $\psi \notin L^p(\R^3)$ for any $p<\infty$.  

The free operator $H_0 = \Delta^2$ is excluded from Theorem~\ref{thm:reg} due to the existence of the  zero energy resonance of the first kind, namely the constant function $\psi(x)=1$.  Consequently, it does not obtain the same decay rate.
There is an explicit solution formula in the free case
$$
e^{-it\Delta^2} f(x) = t^{-\frac{3}{4}} \int_{\R^3} \mathcal K\big(t^{-\frac{1}{4}}(x-y)\big)f(y)\,dy ,
$$
with the kernel $\mathcal K(x)$ being the inverse Fourier transform of $e^{-i|\xi|^4}$.  Since $\mathcal K(x)$ is bounded, smooth, and radially symmetric, it satisfies $|\mathcal K(x) - \mathcal K(0)| \les |x|^2$, which in turn makes 
$|\mathcal K(t^{-1/4}(x-y)) - \mathcal K(0)| \les t^{-\frac{1}{2}}|x-y|^2 \les
t^{-\frac{1}{2}}\la x\ra^2 \la y \ra^2$.  If we define $P_1$ to be the rank-one projection onto the constant function $\psi$, it follows that
$$
\|e^{-it\Delta^2} f - t^{-\frac{3}{4}}\mathcal K(0) P_1 f\|_{L^{\infty,-2} (\R^3)}\les |t|^{-\f54}\|f\|_{L^{1,2} (\R^3)}.
$$
Remark~\ref{rmk:ests} below provides an alternate derivation of this bound for the free evolution.  In Theorem~\ref{thm:swave} we show that this behavior is representative of what occurs when there is a resonance of the first kind.  Comparing the two statements, we see that when a perturbation removes the resonance at zero, the result of Theorem~\ref{thm:reg} improves upon the free case in two ways: both the overall time decay and the required spatial weights in the $|t|^{-\f54}$ part of the solution.

\begin{theorem}\label{thm:swave}
	If there is a resonance of the first kind at zero, and there are no embedded eigenvalues in the spectrum of $H$,  then we have the following
	$$
	\|(e^{-itH}P_{ac}(H)-F_{t})f\|_{L^{\infty,-2} (\R^3)}\les |t|^{-\f54} \|f\|_{L^{1,2} (\R^3)}, \qquad |t|\geq 1
	$$
	with $F_t$ a time-dependent operator of rank at most four that satisfies $\|F_t\|_{1\to \infty} \les |t|^{-\f34}$.
	Provided $|V(x)|\les \la x\ra^{-\beta}$ for some $\beta>11$.
	
\end{theorem}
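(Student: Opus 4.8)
The plan is to carry out the standard spectral argument for dispersive estimates, in the form developed for fourth order operators in \cite{egt} and underlying Theorem~\ref{thm:reg}, while tracking the threshold expansion carefully. From Stone's formula, $e^{-itH}P_{ac}(H)=\f{1}{2\pi i}\int_0^\infty e^{-it\lambda}[R_V^+(\lambda)-R_V^-(\lambda)]\,d\lambda$ with $R_V^\pm(\lambda)=(H-\lambda\mp i0)^{-1}$, and substituting $\lambda=\mu^4$ turns this into $\f{2}{\pi i}\int_0^\infty e^{-it\mu^4}\mu^3[R_V^+(\mu^4)-R_V^-(\mu^4)]\,d\mu$. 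Fix a cutoff $\chi$ with $\chi\equiv 1$ near $\mu=0$, supported in a small neighborhood of the origin, and $\tilde\chi=1-\chi$. The high energy part (the $\tilde\chi$ contribution) is treated exactly as in \cite{egt}: iterating the resolvent identity and invoking the limiting absorption bounds for $H$ together with the hypothesis that $H$ has no embedded eigenvalues, one sees that $\tilde\chi(\mu)[R_V^+(\mu^4)-R_V^-(\mu^4)]$ and its $\mu$-derivatives are bounded between the weighted spaces with integrable decay in $\mu$, and repeated integration by parts in $\mu$ yields $|t|^{-N}$ for any $N$ once $\beta>11$. Hence $F_t$ and the entire content of the theorem originate in the low energy regime $|\mu|\lesssim 1$.

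For the low energy part write $V=Uv^2$ with $v=|V|^{\f12}$, $U=\mathrm{sgn}(V)$, and use the symmetric resolvent identity $R_V^\pm(\mu^4)=R_0^\pm(\mu^4)-R_0^\pm(\mu^4)\,v\,[M^\pm(\mu)]^{-1}\,v\,R_0^\pm(\mu^4)$ with $M^\pm(\mu)=U+vR_0^\pm(\mu^4)v$. Since $(\Delta^2-\mu^4)^{-1}=\f{1}{2\mu^2}[(-\Delta-\mu^2)^{-1}-(-\Delta+\mu^2)^{-1}]$, the free kernel equals $\f{e^{i\mu|x-y|}-e^{-\mu|x-y|}}{8\pi\mu^2|x-y|}$, so as $\mu\to0$
\be
R_0^\pm(\mu^4)(x,y)=\f{1\pm i}{8\pi\mu}-\f{|x-y|}{8\pi}+O\big(\mu|x-y|^2\big),
\ee
the $O(\mu|x-y|^2)$ remainder being in fact bounded by $|x-y|^2$ uniformly once the two displayed terms are removed. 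The key structural feature --- and the reason the free evolution has a $|t|^{-\f34}$ leading term and is excluded from Theorem~\ref{thm:reg} --- is that the free fourth order resolvent is itself singular of order $\mu^{-1}$ at zero energy, the singular part being the rank one operator $f\mapsto\f{1\pm i}{8\pi\mu}\int f$. Accordingly $M^\pm(\mu)=\f{1\pm i}{8\pi\mu}|v\ra\la v|+T+O(\mu)$, where $T=U+vG_0v$ and $G_0$ has kernel $-\f{|x-y|}{8\pi}$. A Feshbach reduction with respect to the rank one projection onto $\mathrm{span}\{v\}$ and its complement $Q$ shows $[M^\pm(\mu)]^{-1}$ is governed by the invertibility of $QTQ$ on $QL^2$: ``zero is regular'' means $QTQ$ is invertible, while a resonance of the first kind corresponds to a nontrivial kernel $S$ of $QTQ$ whose associated functions $\psi=G_0v\phi$, $\phi\in\mathrm{range}(S)$, are bounded but non-$L^p$; this resonance space has dimension at most four (one constant direction together with up to three angular ones, cf.\ the threshold analysis in \cite{egt}).

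When $S\ne0$, a second Feshbach step (equivalently the Jensen--Nenciu inversion lemma) produces $[M^\pm(\mu)]^{-1}=\mu^{-1}\Gamma_\pm+(\text{bounded family})$, where $\Gamma_\pm$ is finite rank with range and corange inside $\mathrm{range}(S)\subset\{v\}^\perp$, so $\Gamma_\pm v=0=\la v|\Gamma_\pm$. Inserting this into the resolvent identity, these orthogonalities annihilate the potential $\mu^{-3}$ and $\mu^{-2}$ contributions of $R_0^\pm v[M^\pm]^{-1}vR_0^\pm$; the surviving $\mu^{-1}$ contribution is $G_0v\,\Gamma_\pm\,vG_0$ plus a scalar multiple of $f\mapsto\int f$ that exactly cancels the $\mu^{-1}$ singularity of $R_0^\pm$ inside $R_V^\pm$; and the $\mu^0$ terms are checked to cancel in the difference $R_V^+-R_V^-$ (they are independent of the sign $\pm$, which is the mildness afforded by a first-kind resonance). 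Thus on $\mathrm{supp}\,\chi$ one obtains $R_V^+(\mu^4)-R_V^-(\mu^4)=\mu^{-1}\Gamma+\mu\,E(\mu)$ with $\Gamma=-G_0v(\Gamma_+-\Gamma_-)vG_0$ of rank at most four and $E$ a smooth $\mu$-family. Define $F_t=\f{2}{\pi i}\,\Gamma\int_0^\infty e^{-it\mu^4}\mu^2\chi(\mu)\,d\mu$, i.e.\ the Stone integral of the $\mu^{-1}\Gamma$ term. Because the range of $\Gamma_+-\Gamma_-$ is orthogonal to $v$, the leading $|x|$ and $|y|$ growth in the kernel of $G_0v(\Gamma_+-\Gamma_-)vG_0$ cancels and the kernel is bounded, so $\Gamma\colon L^1\to L^\infty$; rescaling $\mu=|t|^{-1/4}s$ gives $\big|\int_0^\infty e^{-it\mu^4}\mu^2\chi(\mu)\,d\mu\big|\les|t|^{-\f34}$, hence $\|F_t\|_{1\to\infty}\les|t|^{-\f34}$ and $\mathrm{rank}\,F_t\le 4$.

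It remains to bound $\f{2}{\pi i}\int_0^\infty e^{-it\mu^4}\mu^4\chi(\mu)E(\mu)\,d\mu$ (plus the negligible high energy term) by $|t|^{-\f54}$ from $L^{1,2}$ to $L^{\infty,-2}$. Decompose $E(\mu)$ into finitely many pieces: the non-oscillatory ones have kernels bounded, together with their $\mu$-derivatives, by $\la x\ra^2\la y\ra^2$ on $\mathrm{supp}\,\chi$ --- the $\la\cdot\ra^2$ coming from the $O(\mu|x-y|^2)$ remainder of $R_0^\pm$ sitting in the two outer factors, the inner variables being absorbed by the decay of $v$, which is where $\beta>11$ enters --- and the oscillatory ones carry a factor $e^{\pm i\mu|x-y|}$ and are treated by stationary phase for the combined phase $t\mu^4\mp\mu|x-y|$. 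For the non-oscillatory pieces, writing $\mu^4e^{-it\mu^4}=\f{i}{4t}\,\mu\,\partial_\mu e^{-it\mu^4}$ and integrating by parts once in $\mu$ (boundary terms vanish) reduces matters to $|t|^{-1}$ times integrals $\int_0^\infty e^{-it\mu^4}\chi(\mu)a(\mu,x,y)\,d\mu$ with $a$ smooth and bounded by $\la x\ra^2\la y\ra^2$; these are $O(|t|^{-1/4})$ by the rescaling/van der Corput bounds for the quartic phase, uniformly in $x,y$, so the $\la x\ra^2\la y\ra^2$ growth is absorbed exactly by the weights $\la x\ra^{-2}$, $\la y\ra^{-2}$ (using $|x-y|^2\les\la x\ra^2\la y\ra^2$), yielding $|t|^{-1}\cdot|t|^{-1/4}=|t|^{-\f54}$; the oscillatory pieces are at least as good. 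The principal obstacle is the low energy resolvent expansion in the resonance case: performing the nested Feshbach/Jensen--Nenciu inversion of $M^\pm(\mu)$, identifying $\Gamma$ and checking $\mathrm{rank}\,\Gamma\le 4$ and $\Gamma_\pm v=0$ (the latter being precisely what makes $F_t$ bounded $L^1\to L^\infty$), and --- most delicately --- verifying that once the $\mu^{-1}$ term is removed the remainder is $O(\mu)$ rather than $O(1)$, since an $O(1)$ remainder would only produce $|t|^{-1}$ decay. This interlocks with the need for oscillatory integral bounds for the degenerate quartic phase that are uniform in $|x-y|$ and compatible with the polynomial weights.
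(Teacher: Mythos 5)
Your overall architecture---Stone's formula, the symmetric resolvent identity, a two-step Feshbach/Jensen--Nenciu inversion of $M^\pm(\mu)$, extraction of a rank-$\le 4$, $\mu^{-1}$-singular term as $F_t$, and quartic-phase oscillatory integral bounds---is the same as the paper's. But there is a concrete error in your identification of $F_t$. You argue that since the $\mu^{-1}$ coefficient $\Gamma_\pm$ of $[M^\pm(\mu)]^{-1}$ has range and corange orthogonal to $v$, the only surviving $\mu^{-1}$ contribution to $R_V^\pm$ is $G_0v\Gamma_\pm vG_0$ (plus the scalar that cancels the free singularity). That orthogonality does kill the products of $\mu^{-1}\Gamma_\pm$ with the singular parts $\f{a^\pm}{\mu}\mathbf 1$ of the two outer free resolvents, but it says nothing about the products of those singular parts with the \emph{regular} ($O(1)$ and $O(\mu)$) coefficients of $[M^\pm(\mu)]^{-1}$, which are not orthogonal to $v$ (the operator $S$ of \eqref{def:S} contains the projection $P$ onto the span of $v$). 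These produce additional order-$\mu^{-1}$, rank-$\le 4$ terms: the full $\mu^{-1}$ coefficient is the first bracket in \eqref{eq:ugly expansion}, namely $G_0vQD_0QB_{-1}QD_0QvG_0$ \emph{plus} the cross terms $a\mathbf 1 vM_1B_{-1}QD_0QvG_0$, $G_0vQD_0QB_{-1}M_1va\mathbf 1$ and $a\mathbf 1 vM_1B_{-1}M_1va\mathbf 1$, packaged as $C_{-1}^\pm$ in Proposition~\ref{prop:low res}. These cross terms are generically nonzero and do not cancel in the $+/-$ difference, so with your $F_t$ the operator $e^{-itH}P_{ac}(H)-F_t$ would still contain a finite-rank piece decaying only like $|t|^{-3/4}$, and the claimed $|t|^{-5/4}$ bound would fail.

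Two further points. First, the two assertions you yourself flag as the principal obstacles---that the order-$\mu^0$ terms of $R_V^+(\mu^4)$ and $R_V^-(\mu^4)$ coincide, and that the remainder after removing the $\mu^{-1}$ and $\mu^0$ terms is $O_2(\mu\la x\ra^2\la y\ra^2)$ rather than merely $O(1)$---are asserted but not proved; in the paper these occupy most of Section~\ref{sec:swave} (the $\pm$-independence rests on the algebraic identities $a^+a_1^+=a^-a_1^-$, the $\pm$-independence of $B_0$, $aM_1$, $aM_2B_{-1}$, etc., applied term by term to \eqref{eq:ugly expansion}), so the proposal is incomplete precisely where the work lies. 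Second, your high-energy claim of $|t|^{-N}$ decay for every $N$ is too strong: each integration by parts costs a derivative of $R_V(\lambda^4)$, which both demands more decay of $V$ and introduces additional spatial weights (already $\partial_\lambda^2 R^\pm(H_0,\lambda^4)=O(\lambda^{-2}\la x\ra\la y\ra)$), so one stops after two integrations by parts with $|t|^{-2}\la x\ra\la y\ra$ as in Proposition~\ref{prop:large}; this still suffices for the theorem, so here the issue is only the justification, not the conclusion.
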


Effectively, the resonant perturbed evolution has two pieces.  The time-decay of $F_t$ matches the natural decay rate of the free equation, and it has rank at most four, the maximum dimension of the zero energy resonance space if there is only a resonance of the first kind.  We explicitly construct this operator in \eqref{Ft defn} below. 
The remaining piece enjoys the same rate of time-decay as when zero energy is regular.    In all cases the small time estimates are  
$$
\|e^{-itH}P_{ac}(H)f\|_{L^{\infty} (\R^3)}\les |t|^{-\f34} \|f\|_{L^{1} (\R^3)}, \qquad |t|<1,
$$
due to the high energy portion of the evolution.
Hence the dispersive bounds we prove are integrable in time over the entire real line.  If integrability in time is the main concern, our proof of Theorem~\ref{thm:swave} is easily modified to show the bound
$$
\|(e^{-itH}P_{ac}(H)-F_{t})f\|_{L^{\infty,-2} (\R^3)}\les |t|^{-1-} \|f\|_{L^{1,2} (\R^3)}, \qquad |t|\geq 1,
$$
which necessitates only $|V(x)|\les \la x\ra^{-\beta}$ for some $\beta>9$, see Remark~\ref{rmk:wtd} below.

The fourth order operators $H=\Delta^2-\epsilon \Delta +V$ with $\epsilon\in \{0,\pm1\}$ were formulated to study the propagation of laser beams in a bulk medium with Kerr non-linearity, \cite{K,KS}.  Dispersive estimates were considered in \cite{BN,BKS}.  Much of the analysis of the dispersive estimates has been done for the case of $\epsilon=0$, with recent work spurred on by the the weighted $L^2$ based analysis of Feng, Soffer and Yao in dimensions $n\geq 5$, \cite{fsy}.  The effect of zero energy resonances was then investigated by Toprak and the second author, \cite{GT4} in four dimensions and Erdo\smash{\u{g}}an, Toprak and the second author in three dimensions in the $L^1\to L^\infty$ setting, \cite{egt}.  Weighted $L^2$ estimates are considered in \cite{FWY} in higher dimensions.  The case of $\epsilon=1$ is considered in \cite{feng}, the case of $\epsilon =-1$ is open.  Higher order Schr\"odinger operators $(-\Delta)^m+V$ are studied in \cite{soffernew}.  In contrast to the second order Schr\"odinger operator, for higher order operators there can be embedded eigenvalues even for perturbations by compactly supported, smooth potentials. Examples can be constructed using the fact that the analogous Helmholtz equation $(\Delta^2 - \lambda^4)\psi = 0$ admits exponentially decaying solutions in $\R^3$. Theorem 1.11 and Remark 1.12 in \cite{soffernew} establish a criteria to rule out embedded eigenvalues. Of particular relevance to the class of potentials we consider is the criterion that a repulsive potential, one satisfying  $V (\gamma x) \leq  V (x)$ for all $\gamma > 1$,  does not have any embedded eigenvalues.

The recent study of the dispersive bounds for fourth order equations builds heavily on results for the (second order) Schr\"odinger operator, see \cite{JenKat,JSS,Mur,GS,Sc2,ES,goldE,eg2,EGG,GG1,GG2} 
for study of the effect of threshold obstructions on the dispersive estimates.  Generically a threshold obstruction leads to a slower large time-decay in the dispersive estimate.  The result of Theorem~\ref{thm:reg} mirrors the result of Erdo\smash{\u{g}}an and the second author in \cite{eg3} for the two dimensional Schr\"odinger operator, where one can obtain an integrable time-decay at the cost of spatial weights.  The result of Theorem~\ref{thm:swave} mirrors the result of Toprak, \cite{Top}, which connected results in \cite{eg2,eg3} to quantify the effect of a `mild' resonance on the evolution of the two dimensional Schr\"odinger operator.  Such estimates are of use in the study of asymptotic stablity of special solutions to nonlinear equations.  Nonlinear fourth order equations have been studied, see for example \cite{Lev,Lev1,LS,CLB,P,P1,CLB1,MXZ1,MXZ2,DDY,Dinh,FWY1}.

As is the standard set-up for proving dispersive estimates, we utilize the spectral theory of $H$ to reduce to bounding oscillatory integrals in the spectral parameter.  First, we relate the resolvent of the free fourth order operator to the free second order Schr\"odinger operator (c.f. \cite{fsy} or apply the second resolvent identity) to see that
\begin{align}\label{RH_0 rep}
R (H_0; z):=( \Delta^2 - z)^{-1} = \frac{1}{2\sqrt{z}} \Big( (-\Delta-\sqrt{z})^{-1} - (-\Delta- \sqrt{-z})^{-1} \Big), \quad z\in \mathbb C\setminus[0,\infty).
\end{align}
For convenience, we denote $R_0(z)=(-\Delta-z)^{-1}$ for the usual Schr\"odinger resolvent.  For the potentials considered we have a Weyl criterion and hence $\sigma_{ac}(H)=\sigma_{ac}(\Delta^2)=[0,\infty)$.
Let $\lambda \in \R^{+}$, we define the limiting resolvent operators by
\begin{align}
&\label{RH_0 def}R^{\pm}(H_0; \lambda ) := R (H_0; \lambda \pm i0 )= \lim_{\epsilon \to 0^+}(\Delta^2 - ( \lambda  \pm i\epsilon))^{-1}, \\
& \label{Rv_0 def} R_V^{\pm}(\lambda ) := R_V (\lambda  \pm i0 )= \lim_{\epsilon \to 0^+}(H - ( \lambda \pm i\epsilon))^{-1}.
\end{align}

Note that using the representation \eqref{RH_0 rep} for $R (H_0;z)$ in   definition \eqref{RH_0 def} with $z=w^4$ for $w$ in the first quandrant of the complex plane, and taking limits as $w$ approaches the real ($w\to \lambda$) and imaginary axes ($w\to i\lambda$) from the first quadrant, we obtain 
\be\label{eq:4th resolvdef}
R^{\pm}(H_0; \lambda^4)= \frac{1}{2 \lambda^2} \Big( R^{\pm}_0(\lambda^2) - R_0(-\lambda^2) \Big),\,\,\,\lambda>0.
\ee 
Note that $R_0(-\lambda^2 )$ is a bounded operator on $L^2$ when $\lambda>0$. Further, by Agmon's limiting absorption principle, \cite{agmon}, $R^{\pm}_0(\lambda^2)$ is well-defined between weighted $L^2$ spaces. Therefore, $R^{\pm}(H_0; \lambda^4)$ is also well-defined between these weighted spaces. This property extends to $R_V^{\pm}(\lambda )$ provided there are no eigenvalues in the spectrum, \cite{fsy}.  While the absence of embedded eigenvalues is a standard assumption in the analysis of dispersive estimates, we note that \cite{soffernew} establishes classes of potentials for which embedded eigenvalues cannot exist but we choose to keep the absence of embedded eigenvalues as an overarching assumption.
Using the functional calculus and Stone's formula we write 
\begin{align}
\label{stone}
\ e^{-itH}    P_{ac}(H) f(x) = \frac1{2\pi i}  \int_0^{\infty} e^{-it\lambda}   [R_V^+(\lambda)-R_V^{-}(\lambda)]  f(x) d\lambda.
\end{align}
Here the difference of the perturbed resolvents provides the spectral measure.   We make the change of variables $\lambda \mapsto \lambda^4$ to bound
$$
\frac2{\pi i}  \int_0^{\infty} e^{-it\lambda^4} \lambda^3  [R_V^+(\lambda^4)-R_V^{-}(\lambda^4)]  f(x) d\lambda.
$$

For the convenience of the reader, we have gathered the notation and terminology we use throughout the paper.  Similar notation is used in previous study of the fourth order Schr\"odinger operator in \cite{GT4,egt}.
For an operator $\mE(\lambda) $, we write $\mE(\lambda)=O_k(\lambda^{-\alpha})$ if its kernel $\mE(\lambda)(x,y)$   has the property
\be\label{O1lambda} \sup_{x,y\in\R^3,  			\lambda>0}\big[\lambda^{\alpha}|\mE(\lambda)(x,y)|+\lambda^{\alpha+1} |\partial_\lambda\mE(\lambda)(x,y)|+\dots +\lambda^{\alpha+k}|\partial_\lambda^k\mE(\lambda)(x,y)|\big]<\infty.
\ee 
Similarly, we use the notation $\mE(\lambda)=O_k(\lambda^{-\alpha}g(x,y))$ if $\mE(\lambda)(x,y)$   satisfies
\be\label{O1lambdag}   |\mE(\lambda)(x,y)|+\lambda  |\partial_\lambda\mE(\lambda)(x,y)|+\dots +\lambda^{k}|\partial_\lambda^k\mE(\lambda)(x,y)|\les \lambda^{-\alpha}g(x,y).
\ee
For operators on $L^2(\R^3)$ we also recall the following terminology from \cite{Sc2,eg2}: An operator $T:L^2(\R^3) \to   L^2(\R^3)$ with kernel $T(\cdot,\cdot)$ is absolutely bounded if the operator with kernel $|T(\cdot,\cdot)|$ is bounded from $  L^2(\R^3)$ to $ L^2(\R^3)$ as well. 	 
Finite rank and Hilbert-Schmidt operators are immediately absolutely bounded.
The symbol $\Gamma_\theta$  denotes a $\lambda$ dependent absolutely bounded operator  satisfying 
\be\label{eq:Gamma def}
\big \||\Gamma_\theta|\big \|_{L^2\to L^2}+ \lambda \big \||\partial_\lambda \Gamma_\theta|\big \|_{L^2\to L^2}+ \lambda^2 \big \||\partial_\lambda^2 \Gamma_\theta|\big \|_{L^2\to L^2}  \les \lambda^{\theta},\quad \lambda>0.
\ee
In other words it is similar to $O_2(\lambda^\theta)$ except in the family of bounded operators on $L^2$ instead of as maps from $L^1$ to $L^\infty$.  The operator represented by $\Gamma_\theta$ may be different in each occurrence.   

We use the smooth, even low energy cut-off $\chi$ defined by $\chi(\lambda)=1$ if $|\lambda|<\lambda_0\ll 1$ and $\chi(\lambda)=0$ when $|\lambda|>2\lambda_0$ for some sufficiently small constant $0<\lambda_0\ll1$.  In analyzing the high energy we utilize the complementary cut-off $\widetilde \chi(\lambda)=1-\chi(\lambda)$. Throughout the paper, an exponent $a-$ denotes $a-\epsilon$ for an arbitrarily small, but fixed $\epsilon>0$.  Similarly, $a+$ indicates $a+\epsilon$.

The paper is organized as follows.  We begin in Section~\ref{sec:free} by showing that the unperturbed solution operator satisfies the low energy bounds in Theorem~\ref{thm:swave}.  In Section~\ref{sec:regular} we prove the low energy portion of the evolution satisfies Theorem~\ref{thm:reg} and show that perturbing the free operator by a potential that removes the natural zero energy resonance leads to faster time decay at the cost of spatial weights.  In Section~\ref{sec:swave} we establish the low energy portion of Theorem~\ref{thm:swave} and construct the operator $F_t$.  Finally, in Section~\ref{sec:high} we control the high energy portion of the evolution completing the proofs of Theorems~\ref{thm:reg} and \ref{thm:swave}.

\section{The Free  Evolution}\label{sec:free}

In this section we obtain expansions for the free fourth order Schr\"odinger resolvent operators $R^{\pm}(H_0; \lambda^4)$, using the identity \eqref{RH_0 rep}  and the Bessel function representation of the Schr\"odinger free resolvents $R^{\pm}_0(\lambda^2)$. We use these expansions to establish  dispersive estimates for the free fourth order Schr\"odinger evolution, and throughout the remainder of the paper to study the spectral measure for the perturbed operator.  Our expansions are similar to those obtained in \cite{egt}, but our analysis requires control on further derivatives in all cases.

Recall the expression of the free Schr\"odinger resolvents in dimension three, (see \cite{GS} for example) 
$$
R^{\pm}_0(\lambda^2) (x,y)= \frac{e^{\pm i \lambda|x-y|}}{4 \pi |x-y|} .
$$
Therefore, by  \eqref{eq:4th resolvdef}, 
\be\label{eq:R0lambda}
R^\pm (H_0, \lambda^4) (x,y)= \frac{1}{2 \lambda^2}  \Bigg( \frac{e^{\pm i \lambda|x-y|}}{4 \pi |x-y|}  -  \frac{e^{-\lambda|x-y|}}{4 \pi |x-y|}  \Bigg).
\ee
We have the following representation for the $R(H_0, \lambda^4)$.
\begin{align} \label{eq:R0low}
R^\pm(H_0, \lambda^4) (x,y) = \frac{a^{\pm}}{\lambda}+\mathcal E_0(\lambda), \qquad   \mathcal E_0(\lambda)= G_0 +\mathcal E_1(\lambda), \qquad \mE_1(\lambda)= a_1^{\pm}  \lambda G_1 +\mathcal E_2(\lambda).
\end{align} 
Here 
\begin{align}\label{adef}
&a^{\pm}:= \frac{1\pm i}{8 \pi}, \quad a_1^{\pm}= \frac{1\mp i }{ 8 \pi \cdot (3!)}, \quad G_0 (x,y) = -\frac{|x-y|}{8\pi}, \quad G_1 (x,y) = |x-y|^2.
\end{align}
Notice that   $G_0=(\Delta^2)^{-1}$. 
When $\lambda|x-y|\les 1$, we have
$\mathcal E_0(\lambda)=O_2(\lambda^{-1+\gamma} |x-y|^{ \gamma} )$, $\mathcal E_1(\lambda)=O_2(\lambda^\gamma |x-y|^{1+\gamma})$, $ 0 \leq \gamma \leq 1$ and $ \mathcal E_2(\lambda)=O_2(\lambda^{1+\gamma} |x-y|^{2+\gamma} ), $ for $ 0 \leq \gamma \leq 2$.  The larger range of $\gamma$ for $\mathcal E_2(\lambda)$ follows since the $\lambda^2$ terms in the series expansion of \eqref{eq:R0lambda} cancel.
When $\lambda |x-y|\gtrsim 1$, the expansion remains valid for the first derivative.

For control of the second derivative, we note that if $\lambda |x-y|\gtrsim 1$,
$$
|\partial_{\lambda}^2R^\pm(H_0, \lambda^4) (x,y)|\les \frac{|x-y|}{\lambda^2}(\lambda |x-y|)^\ell, \qquad \text{for any }\ell \geq 0.
$$
Hence, $\partial_\lambda^2 \mathcal E_j(\lambda)$, $j=0,1$ satisfies the same bounds when $\lambda |x-y|\gtrsim 1$.
Thus, we may write
\begin{align}
\mathcal E_0(\lambda)=O_1(\lambda^{-1+\gamma}|x-y|^\gamma ),  \qquad
\mathcal E_0(\lambda)=O_2(|x-y| ),\quad 0\leq \gamma \leq 1, \label{eq:E0 bd} \\
\mathcal E_1(\lambda)=O_2(\lambda^{\gamma}|x-y|^{1+\gamma} ),\quad 0\leq \gamma \leq 1,\label{eq:E1 bd}\\
\mathcal E_2(\lambda)=O_2(\lambda^{1+\gamma}|x-y|^{2+\gamma}), \quad 0\leq \gamma \leq 2.\label{eq:E2 bd}
\end{align}

The following lemma is used repeatedly to obtain low energy dispersive estimates. 
\begin{lemma}\label{lem:t+a4bound} Fix $0<\alpha<4$. Assume that $\mE(\lambda)=O_2(\lambda^{ \alpha})$ for $0<\lambda\les 1$, 
	then we have the bound
	\be\label{eq:t+a4bound}
	\bigg|
	\int_{0}^\infty e^{-it\lambda^4}  \chi(\lambda) \lambda^3  \mE(\lambda)\, d\lambda
	\bigg| \les \la t\ra^{-1-\f\alpha4}.
	\ee
\end{lemma}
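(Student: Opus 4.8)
The plan is to reduce to a standard one-dimensional oscillatory integral by the substitution $u=\lambda^4$, which turns the phase into the linear one $-itu$ and absorbs the factor $\lambda^3$. Since $\lambda^3\,d\lambda=\tfrac14\,du$, the integral in \eqref{eq:t+a4bound} equals $\tfrac14\int_0^\infty e^{-itu}\Psi(u)\,du$ with $\Psi(u):=\chi(u^{1/4})\,\mE(u^{1/4})$, a function supported in $0\le u\le (2\lambda_0)^4$. Because $\partial_u=(4\lambda^3)^{-1}\partial_\lambda$, the hypothesis $\mE(\lambda)=O_2(\lambda^\alpha)$ together with $|\chi^{(j)}|\les1$ translates, via the chain rule, into
$$
|\Psi(u)|\les u^{\frac\alpha4},\qquad |\Psi'(u)|\les u^{\frac\alpha4-1},\qquad |\Psi''(u)|\les u^{\frac\alpha4-2}
$$
for $0<u\le(2\lambda_0)^4$, while $\Psi$ and its first two derivatives vanish at the right endpoint since $\chi$ is smooth and identically $0$ there. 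So it suffices to prove $\big|\int_0^\infty e^{-itu}\Psi(u)\,du\big|\les\la t\ra^{-1-\alpha/4}$.

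For $|t|\le \lambda_0^{-4}$ there is nothing to do: the first bound above gives $\big|\int\big|\les\int_0^{(2\lambda_0)^4}u^{\alpha/4}\,du\les1\approx\la t\ra^{-1-\alpha/4}$ in this range. For $|t|>\lambda_0^{-4}$ (so $1/|t|<(2\lambda_0)^4$) we split the integral at $u=1/|t|$. On $0<u<1/|t|$ we estimate crudely, $\big|\int_0^{1/|t|}e^{-itu}\Psi(u)\,du\big|\les\int_0^{1/|t|}u^{\alpha/4}\,du\les|t|^{-1-\alpha/4}$. On $1/|t|<u<(2\lambda_0)^4$ we integrate by parts twice using $e^{-itu}=(-it)^{-1}\partial_u e^{-itu}$. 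In each step the boundary term at $u=(2\lambda_0)^4$ vanishes, while the boundary term at $u=1/|t|$ is $\les|t|^{-1}\,|\Psi(1/|t|)|\les|t|^{-1-\alpha/4}$ after the first step and $\les|t|^{-2}\,|\Psi'(1/|t|)|\les|t|^{-2}\,|t|^{1-\alpha/4}=|t|^{-1-\alpha/4}$ after the second. The integral remaining after two integrations by parts is $|t|^{-2}\int_{1/|t|}^{(2\lambda_0)^4}e^{-itu}\Psi''(u)\,du$, bounded by $|t|^{-2}\int_{1/|t|}^{(2\lambda_0)^4}u^{\alpha/4-2}\,du$. Here the exponent $\alpha/4-2$ lies below $-1$ precisely because $\alpha<4$, so the integral is controlled by its lower endpoint, $\les (1/|t|)^{\alpha/4-1}=|t|^{1-\alpha/4}$, and the term is $\les|t|^{-1-\alpha/4}$. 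Adding the pieces proves \eqref{eq:t+a4bound}.

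The one point that deserves care is that last integral: it is exactly the condition $\alpha<4$ that makes $u^{\alpha/4-2}$ non-integrable at $0$, forcing the lower endpoint $u=1/|t|$ to dominate and thereby producing the extra $|t|^{-\alpha/4}$ beyond the generic $|t|^{-1}$. The hypothesis $O_2$ (two $\lambda$-derivatives) is exactly what licenses the two integrations by parts, and the lower bound $\alpha>0$ is what makes $\la t\ra^{-1-\alpha/4}$ a genuine gain (for $\alpha\le0$ a single integration by parts already suffices). One should also keep in mind that the implicit constants depend on the fixed cutoff scale $\lambda_0$, which is harmless.
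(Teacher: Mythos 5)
Your proof is correct and is essentially the paper's argument: the substitution $u=\lambda^4$ plays exactly the role of the paper's identity $\partial_\lambda e^{-it\lambda^4}/(-4it)=e^{-it\lambda^4}\lambda^3$, and you split at the same point ($u=1/|t|$, i.e.\ $\lambda=|t|^{-1/4}$), integrate by parts twice, and bound the boundary terms and remaining integral in the same way.
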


\begin{proof}
	By the support condition  and since $0<\alpha$,  the integral is bounded.  Now, for $|t|>1$ we rewrite the integral in \eqref{eq:t+a4bound} as
	$$
	\int_{0}^{t^{-\f14}} e^{-it\lambda^4} \lambda^3  \chi(\lambda)\mathcal E(\lambda)\, d\lambda+\int_{t^{-\f14}}^\infty e^{-it\lambda^4} \lambda^3 \chi(\lambda)\mathcal E(\lambda)\, d\lambda:=I+II.
	$$
	We see that
	$$
	|I|\leq \int_0^{t^{-\f14}} \lambda^{3+\alpha}\, d\lambda \les t^{-1-\f\alpha4 }.
	$$
	For the second term, we use $\partial_\lambda e^{-it\lambda^4}/(-4it)=e^{-it\lambda^4} \lambda^3$ to  integrate by parts twice,
	\begin{align*}
	II 
	=  \frac{e^{-it\lambda^4} \mathcal E(\lambda)}{-4it}\bigg|_{t^{-\f14}} -  \frac{e^{-it\lambda^4} \mathcal E'(\lambda)}{(4it)^2 \lambda^3}\bigg|_{t^{-\f14}}
	+ \frac{1}{(4\pi it)^2}\int_{t^{-\f14}}^\infty e^{-it\lambda^4} \partial_{\lambda}\bigg( \frac{\mathcal E'(\lambda)}{\lambda^3} \bigg)   \, d\lambda .
	\end{align*}
	Then, we have
	$$
	|II| \les  \frac{ |\mathcal E(t^{-\f14})|}{t}+\frac{ |\mathcal E'(t^{-\f14})|}{t^{\f54}}+\frac{1}{t^2}\int_{t^{-\f14}}^\infty \lambda^{\alpha-5}\, d\lambda \les t^{-1-\f{\alpha}{4}}.
	$$

\end{proof}

Another useful bound is 
\begin{lemma}\label{lem:t-34bound} \cite[Lemma~3.1]{egt} Fix $0<\alpha<4$. Assume that $\mE(\lambda)=O_1(\lambda^{-\alpha})$ for $0<\lambda\les 1$, 
	then we have the bound
	\be\label{eq:t-34bound}
	\bigg|
	\int_{0}^\infty e^{-it\lambda^4}  \chi(\lambda) \lambda^3  \mE(\lambda)\, d\lambda
	\bigg| \les \la t\ra^{-1+\f\alpha4}.
	\ee
\end{lemma}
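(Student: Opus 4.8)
The plan is to follow the same dyadic-in-$\lambda$ splitting used in the proof of Lemma~\ref{lem:t+a4bound}, adapted to the fact that now only one derivative of $\mE$ is controlled and the power of $\lambda$ appearing is negative. First, for $|t|\le 1$ the target $\la t\ra^{-1+\f\alpha4}$ is comparable to a constant, so it suffices to observe that on the support of $\chi$ one has $\lambda^3|\mE(\lambda)|\les\lambda^{3-\alpha}$, which is integrable near $\lambda=0$ precisely because $\alpha<4$; hence the integral is bounded uniformly in $t$.

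For $|t|>1$ I would split at the natural scale $\lambda=t^{-\f14}$, writing the integral as $I+II$ with $I=\int_{0}^{t^{-1/4}}$ and $II=\int_{t^{-1/4}}^{\infty}$. On the first piece one simply discards the oscillation: $|I|\le\int_{0}^{t^{-1/4}}\lambda^{3-\alpha}\,d\lambda\les t^{-1+\f\alpha4}$, again using $\alpha<4$ for convergence at the origin.

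For the second piece I would integrate by parts a single time via $e^{-it\lambda^4}\lambda^3=\partial_\lambda e^{-it\lambda^4}/(-4it)$. The boundary term at $\lambda=t^{-1/4}$ is $\les t^{-1}|\mE(t^{-1/4})|\les t^{-1}\cdot t^{\f\alpha4}=t^{-1+\f\alpha4}$, and the boundary term at $\infty$ vanishes since $\chi$ is compactly supported. The remaining integral is $\frac{1}{4it}\int_{t^{-1/4}}^{\infty}e^{-it\lambda^4}\,\partial_\lambda\big(\chi(\lambda)\mE(\lambda)\big)\,d\lambda$, with $\partial_\lambda(\chi\mE)=\chi'\mE+\chi\mE'$. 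The $\chi'$ term is supported where $\lambda\sim\lambda_0\sim1$ and is harmless, contributing $\les t^{-1}$. The main contribution $\chi\mE'$ is estimated using $|\mE'(\lambda)|\les\lambda^{-\alpha-1}$ by $\frac1t\int_{t^{-1/4}}^{2\lambda_0}\lambda^{-\alpha-1}\,d\lambda\les\frac1t\,t^{\f\alpha4}=t^{-1+\f\alpha4}$, where $\alpha>0$ ensures the $\lambda$-integral is dominated by its lower endpoint. Collecting the three contributions yields the claimed bound.

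I do not expect a genuine obstacle: this is the "opposite sign" analogue of Lemma~\ref{lem:t+a4bound}, and the only delicate point is bookkeeping. One should check that both strict inequalities $0<\alpha<4$ are essential — at $\alpha=4$ the piece $I$ fails to converge at $\lambda=0$, while at $\alpha=0$ the integral in $II$ produces a spurious logarithm — and note that a single integration by parts already suffices here, since the target rate $\la t\ra^{-1+\f\alpha4}$ is weaker than $t^{-1}$, so no second integration (hence no second derivative of $\mE$, which is why the hypothesis is only $O_1$) is required.
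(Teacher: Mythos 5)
Your argument is correct and is exactly the standard approach: the paper itself does not prove this lemma but cites \cite[Lemma~3.1]{egt}, and your proof is the natural single-integration-by-parts analogue of the paper's proof of Lemma~\ref{lem:t+a4bound} (same splitting at $\lambda=t^{-1/4}$, trivial bound on $[0,t^{-1/4}]$, one integration by parts on the rest), which is also how the cited result is established. The bookkeeping with the boundary term, the $\chi'$ contribution, and the roles of the endpoints $\alpha=0$ and $\alpha=4$ all check out.
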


These can be combined to give a characterization of the free evolution.
\begin{lemma}\label{lem:free bound}
	For any $-1\leq\sigma\leq 1$
	We have the expansion
	\begin{multline*}
	\int_{0}^\infty e^{-it\lambda^4} \chi(\lambda) \lambda^3  [R^+(H_0,\lambda^4)-R^-(H_0,\lambda^4)](x,y)\, d\lambda\\
	= (a^+-a^-)\int_0^\infty e^{-it\lambda^4} \chi(\lambda) \lambda^2 \, d\lambda + O(\la t\ra ^{-1 -\f{\sigma}{4}} \la x\ra^{1+\sigma} \la y \ra^{1+\sigma}  )  .	
	\end{multline*}

\end{lemma}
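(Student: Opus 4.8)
The plan is to substitute the resolvent expansion \eqref{eq:R0low} into the integral and split off the part that is singular as $\lambda\to0$. Writing $R^{\pm}(H_0,\lambda^4)(x,y)=\frac{a^{\pm}}{\lambda}+\mathcal E_0^{\pm}(\lambda)(x,y)$ with $\mathcal E_0^{\pm}=G_0+\mathcal E_1^{\pm}$, the profile $G_0$ does not depend on the choice of sign and therefore cancels in the difference, so
\begin{align*}
\lambda^3\big[R^+(H_0,\lambda^4)-R^-(H_0,\lambda^4)\big](x,y)=(a^+-a^-)\lambda^2+\lambda^3\big[\mathcal E_1^+(\lambda)-\mathcal E_1^-(\lambda)\big](x,y).
\end{align*}
Inserting this into the integral reproduces the stated leading term $(a^+-a^-)\int_0^\infty e^{-it\lambda^4}\chi(\lambda)\lambda^2\,d\lambda$ exactly, so the lemma reduces to showing that
\begin{align*}
E(t,x,y):=\int_0^\infty e^{-it\lambda^4}\chi(\lambda)\lambda^3\big[\mathcal E_1^+(\lambda)-\mathcal E_1^-(\lambda)\big](x,y)\,d\lambda
\end{align*}
obeys $|E(t,x,y)|\les\la t\ra^{-1-\frac{\sigma}{4}}\la x\ra^{1+\sigma}\la y\ra^{1+\sigma}$ for every $-1\le\sigma\le1$. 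Since $G_0$ drops out, one also has $\mathcal E_1^+-\mathcal E_1^-=\mathcal E_0^+-\mathcal E_0^-$, so both \eqref{eq:E0 bd} and \eqref{eq:E1 bd} are available for this integrand.

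I would establish the bound only at the endpoints $\sigma=\pm1$ and then interpolate. For $\sigma=-1$: taking $\gamma=0$ in \eqref{eq:E0 bd} gives $\mathcal E_0^{\pm}(\lambda)=O_1(\lambda^{-1})$ uniformly in $x,y$, hence $\mathcal E_0^+-\mathcal E_0^-=O_1(\lambda^{-1})$ as well, and Lemma~\ref{lem:t-34bound} with $\alpha=1$ yields $|E(t,x,y)|\les\la t\ra^{-3/4}$. For $\sigma=1$: taking $\gamma=1$ in \eqref{eq:E1 bd} gives $\mathcal E_1^{\pm}(\lambda)=O_2(\lambda\,|x-y|^2)$, so for fixed $x\ne y$ the scalar $\lambda\mapsto|x-y|^{-2}\big[\mathcal E_1^+(\lambda)-\mathcal E_1^-(\lambda)\big](x,y)$ is $O_2(\lambda)$; applying Lemma~\ref{lem:t+a4bound} with $\alpha=1$ pointwise in $(x,y)$ and multiplying back by $|x-y|^2$ gives $|E(t,x,y)|\les|x-y|^2\la t\ra^{-5/4}\les\la x\ra^2\la y\ra^2\la t\ra^{-5/4}$, using $|x-y|\le|x|+|y|\le2\la x\ra\la y\ra$. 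Since Lemmas~\ref{lem:t+a4bound} and~\ref{lem:t-34bound} are phrased with $\la t\ra$, both endpoint estimates hold for all $t$, so the case $|t|\le1$ needs no separate treatment.

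To fill in the range $-1<\sigma<1$, fix $(t,x,y)$ and set $B(\sigma):=\la t\ra^{-1-\sigma/4}\la x\ra^{1+\sigma}\la y\ra^{1+\sigma}$. The map $\sigma\mapsto\log B(\sigma)$ is affine, so writing $\sigma=2\theta-1$ with $\theta\in[0,1]$ we have $B(\sigma)=B(-1)^{1-\theta}B(1)^{\theta}$. From the two endpoint bounds $|E(t,x,y)|\les B(-1)$ and $|E(t,x,y)|\les B(1)$, the $\theta$-weighted geometric mean gives $|E(t,x,y)|\les B(-1)^{1-\theta}B(1)^{\theta}=B(\sigma)$, which is precisely the claimed estimate for $E$, and hence the lemma.

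I do not anticipate a genuine obstacle. The two things to watch are that the $\lambda$-independent term $G_0$ must be seen to cancel in the resolvent difference—this is what makes both refined bounds \eqref{eq:E0 bd} and \eqref{eq:E1 bd} usable on the remainder, one at each endpoint—and that a single interpolation closes the whole range because the target weight is log-affine in $\sigma$. A concrete alternative that bypasses \eqref{eq:R0low} is to start from the closed form $R^+(H_0,\lambda^4)(x,y)-R^-(H_0,\lambda^4)(x,y)=\frac{i\sin(\lambda|x-y|)}{4\pi\lambda^2|x-y|}$ and estimate the smooth remainder $\frac{i}{4\pi\lambda}\big(\frac{\sin(\lambda|x-y|)}{\lambda|x-y|}-1\big)$ through the same two lemmas, but routing the argument through \eqref{eq:E0 bd}--\eqref{eq:E1 bd} is shorter.
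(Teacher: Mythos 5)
Your proof is correct and follows essentially the same route as the paper: the same decomposition $R^\pm=\frac{a^\pm}{\lambda}+G_0+\mathcal E_1^\pm$ with $G_0$ cancelling in the difference, the bound \eqref{eq:E0 bd} fed into Lemma~\ref{lem:t-34bound} for negative $\sigma$ and \eqref{eq:E1 bd} fed into Lemma~\ref{lem:t+a4bound} for positive $\sigma$. The only (harmless) organizational difference is that you prove just the endpoints $\sigma=\pm1$ and obtain all intermediate $\sigma$ by the pointwise geometric-mean interpolation, whereas the paper treats each $\sigma\neq 0$ directly by choosing $\gamma$ accordingly and interpolates only at $\sigma=0$.
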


\begin{proof}
	Using \eqref{eq:R0low} we have
	$$
	[R^+(H_0,\lambda^4)-R^-(H_0,\lambda^4)](x,y)=\frac{a^+-a^-}{\lambda}+ [\mathcal E_0^+-\mathcal E_0^-](\lambda).
	$$
	Using \eqref{eq:E0 bd}, $\mathcal E_0^+-\mathcal E_0^-=O_1(\lambda^{-1+\gamma} |x-y|^\gamma)$.
	Inserting this into the Stone's formula along with Lemma~\ref{lem:t-34bound}, noting $|x-y| \les \la x\ra \la y\ra$,  yields the claim. suffices to prove the desired bound for $-1+\gamma=\sigma<0$.  If $\sigma> 0$, we use \eqref{eq:E1 bd} and \eqref{eq:R0low} to write $\mathcal E_0^+ -\mathcal E_0^-=\mathcal E_1^+ -\mathcal E_1^-$ in Lemma~\ref{lem:t+a4bound}.  The remaining case of $\sigma=0$ follows by interpolating between the two proven cases of $\sigma=0+$ and $\sigma=0-$ respectively.
	
\end{proof}
\begin{rmk} \label{rmk:ests}  
	It was shown in \cite{BKS,egt} that the free operator satisfies the bound
	$$
	\| e^{i t \Delta^2}  \|_{ L^{1} \rightarrow L^{\infty}} \les t^{-\f34}. 
	$$ 
	A consequence of the bound above is the operator-valued expansion
	$$
	e^{i t \Delta^2}=|t|^{-\f34} A_0+ |t|^{-1-\f{\sigma}{4}} A_1
	$$
	where $A_0:L^1\to L^\infty$ is rank one and $A_1:L^{1,1+\sigma}\to L^{\infty, -1-\sigma}$ for any $-1\leq\sigma\leq 1$.
	
	Theorem~\ref{thm:swave} matches this expansion when $\sigma=1$, except possibly for the rank of $A_0$.
	Comparing this expansion to the result in Theorem~\ref{thm:reg}, we see that if zero energy is regular one obtains better time decay since $A_0\equiv 0$ in this case.  Further, the spatial weights required in Theorem~\ref{thm:reg} are one power smaller for the $|t|^{-\f54}$ term than for the free evolution.
\end{rmk}

\section{Weighted bounds when zero is regular} \label{sec:regular}

The main goal of this section is to establish the following 

\begin{prop}\label{prop:low reg}
	If zero is a regular point and $|V(x)|\les \la x\ra^{-\beta}$ for some $\beta>7$, then we have the following expansion for the perturbed resolvent in a sufficiently small neighborhood of $\lambda=0$:
	\be\label{eq:RV reg}
	R_V^\pm(\lambda^4)(x,y) = C_0+ O_2(\lambda\la x\ra \la y\ra).
	\ee
	
	In particular,
	\be \label{eq:reg Rv diff}
	R_V^+-R_V^- = O_2(\lambda \la x \ra \la y \ra).
	\ee
	
\end{prop}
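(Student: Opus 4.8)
The plan is to run the symmetric resolvent expansion. Write $V=Uv^2$ with $v=|V|^{1/2}$ and $U=\operatorname{sgn}(V)$; then, between polynomially weighted $L^2$ spaces (justified by the limiting absorption principle), the second resolvent identity gives
\[
R_V^\pm(\lambda^4)=R^\pm(H_0,\lambda^4)-R^\pm(H_0,\lambda^4)\,v\,M^\pm(\lambda)^{-1}\,v\,R^\pm(H_0,\lambda^4),\qquad M^\pm(\lambda):=U+v\,R^\pm(H_0,\lambda^4)\,v.
\]
Inserting \eqref{eq:R0low}, the constant kernel $a^\pm/\lambda$ becomes the rank one operator $f\mapsto\tfrac{a^\pm}{\lambda}\la v,f\ra v=\tfrac{a^\pm\|V\|_{L^1}}{\lambda}P$, with $P$ the projection onto $v$, so
\[
M^\pm(\lambda)=\frac{a^\pm\|V\|_{L^1}}{\lambda}\,P+M_0+\Gamma_1,\qquad M_0:=U+vG_0v,
\]
where the $\Gamma_1$ error collects $a_1^\pm\lambda\,vG_1v+v\mE_2(\lambda)v$. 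Here $\beta>7$ (equivalently $\la x\ra^4 V\in L^1$) is precisely the decay that makes $vG_1v$ and $\lambda^{-1}v\mE_2(\lambda)v$ (via \eqref{eq:E2 bd} with $\gamma=0$), as well as $vG_0v$ together with the $\lambda$-derivatives one needs, Hilbert--Schmidt.

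To invert $M^\pm(\lambda)$ for $\lambda$ small, since the singularity lives in the one-dimensional range of $P$, I would use the Feshbach/Jensen--Nenciu block inversion on $L^2=PL^2\oplus QL^2$, $Q:=I-P$. The $(Q,Q)$ block $QM_0Q+Q\Gamma_1Q$ is invertible because zero being a regular point of $H$ is equivalent (cf.\ \cite{egt}) to the invertibility of $QM_0Q$ on $QL^2$, with absolutely bounded inverse $Q(QM_0Q)^{-1}Q+\Gamma_1$; the $P$-direction Schur complement is $\big(\tfrac{a^\pm\|V\|_{L^1}}{\lambda}+O_2(1)\big)P$, which inverts to $\big(\tfrac{\lambda}{a^\pm\|V\|_{L^1}}+O_2(\lambda^2)\big)P$. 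Reassembling the block-inverse formula, the off-diagonal blocks and the $PL^2$ block are $O_2(\lambda)$, while the $QL^2$ block is $Q(QM_0Q)^{-1}Q+O_2(\lambda)$, so
\[
M^\pm(\lambda)^{-1}=\mathcal D_0+\Gamma_1,\qquad \mathcal D_0:=Q(QM_0Q)^{-1}Q,
\]
an absolutely bounded family whose leading part is sign-independent and --- crucially --- satisfies $\mathcal D_0v=0$ since $Qv=0$; in particular $\la v,M^\pm(\lambda)^{-1}v\ra=\tfrac{\lambda}{a^\pm}+O_2(\lambda^2)$.

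Substituting back, the mechanism is that the $O(\lambda)$-size of $\la v,M^\pm(\lambda)^{-1}v\ra$ makes the sandwich term equal, to leading order, $\tfrac{\lambda}{a^\pm}R^\pm(H_0,\lambda^4)(x,0)\,R^\pm(H_0,\lambda^4)(0,y)$ --- the leading approximation being $R^\pm(H_0,\lambda^4)(x,z)\approx R^\pm(H_0,\lambda^4)(x,0)$ for $z\in\operatorname{supp}V$. Subtracting this from $R^\pm(H_0,\lambda^4)(x,y)$ replaces each growing, even summand $\phi(x-y)$ of \eqref{eq:R0low} by $\phi(x-y)-\phi(x)-\phi(y)$, one power of weight better (and leaves an extra product term $\tfrac{\lambda}{a^\pm}\mE_0(x,0)\mE_0(0,y)$, which is $O_2(\lambda\la x\ra\la y\ra)$): the $\tfrac{a^\pm}{\lambda}$'s cancel; $G_0$ leaves the $\lambda$-independent kernel $G_0(x,y)-G_0(x,0)-G_0(0,y)=O(\min(\la x\ra,\la y\ra))$, absorbed into $C_0$; $a_1^\pm\lambda|x-y|^2$ leaves $-2a_1^\pm\lambda\,x\!\cdot\!y=O(\lambda\la x\ra\la y\ra)$; and $\mE_2(x,y)$ leaves the mixed second difference $\mE_2(x,y)-\mE_2(x,0)-\mE_2(0,y)$, which is $O(\lambda|x||y|)$ because the mixed $x$--$y$ derivative of $\mE_2$ is $O(\lambda)$ (interpolating \eqref{eq:E2 bd} in $\gamma$). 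Since $G_0$, $\mathcal D_0$, $\|V\|_{L^1}$ and the products $a^\pm a_1^\pm$ are all sign-independent, the surviving $\lambda$-independent kernel $C_0$ is the same for $+$ and $-$, so \eqref{eq:reg Rv diff} is immediate from \eqref{eq:RV reg} by subtraction.

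The hard part is making the ``leading order'' precise: the subleading multipole terms from expanding $R^\pm(H_0,\lambda^4)(x,z)$ about $z=0$, together with every contribution carrying the $\Gamma_1$ part of $M^\pm(\lambda)^{-1}$, must be shown to be $\lambda$-independent kernels bounded by $\la x\ra\la y\ra$ (hence part of $C_0$) or genuinely $O_2(\lambda\la x\ra\la y\ra)$. This is where $\beta>7$ is consumed --- to make all the weighted integrals against $v$ converge, including those produced by two $\lambda$-derivatives --- where $\mathcal D_0v=0$ is used repeatedly to annihilate the terms that would otherwise be $\lambda$-singular or carry the full $\la x\ra^2\la y\ra^2$ weight, and where one trades powers of $\lambda$ against powers of $|x-y|$ using the $\gamma$-flexibility in \eqref{eq:E0 bd}--\eqref{eq:E2 bd}.
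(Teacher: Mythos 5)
Your proposal is correct and follows essentially the same route as the paper: the symmetric resolvent identity, inversion of $M^\pm(\lambda)$ via the $P\oplus Q$ splitting using that zero regular means $QTQ$ is invertible (the paper packages this as $(A^\pm)^{-1}=QD_0Q+g^\pm(\lambda)S$ rather than a direct Feshbach step, but it is the same computation), cancellation of the $\lambda^{-1}$ singularities through the rank-one $P$-block, and the second-difference mechanism $\phi(\lambda|x-y|)-\phi(\lambda|x|)-\phi(\lambda|y|)=O(\lambda^2\la x\ra\la y\ra)$ to trade a power of $|x-y|^2$ for $\la x\ra\la y\ra$, which is exactly the paper's estimate for $\mathcal E_1-\|V\|_1^{-1}(\mathcal E_1vPv\mathbf 1+\mathbf 1vPv\mathcal E_1)$ via the bounds on $K$, $K_1$, $K_2$. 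The details you defer as ``the hard part'' are precisely what the paper's Lemmas~\ref{lem:RvQ}--\ref{lem:leading terms} carry out, and your description of what must be checked there (sign-independence of the constant terms, repeated use of $Qv=0$, and the $\gamma$-interpolation in \eqref{eq:E0 bd}--\eqref{eq:E2 bd}) is accurate.
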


The low energy portion of Theorem~\ref{thm:reg} follows from this bound and the oscillatory integral estimate in  Lemma~\ref{lem:t+a4bound}.

To understand \eqref{stone} for small energies, i.e. $ 0<\lambda \ll 1$, we use the symmetric resolvent identity.  We define $U(x)=$sign$(V(x))$, $v(x)=|V(x)|^{\f12}$, and write 
\begin{align} \label{resid}
R^{\pm}_V(\lambda^4)= R^{\pm}(H_0, \lambda^4) - R^{\pm}(H_0, \lambda^4)v (M^{\pm} (\lambda))^{-1} vR^{\pm}(H_0, \lambda^4) ,
\end{align}
where $M^{\pm}(\lambda) = U + v R^{\pm}(H_0, \lambda^4) v $. As a result, we need to obtain expansions for $(M^{\pm} (\lambda))^{-1}$.  The behavior of these operators as $\lambda \to 0$ depends on the type of resonances at zero energy, see Section~4 of \cite{egt}. We first develop an approriate expansion when zero is regular to construct the low energy spectral measure in Stone's formula, \eqref{stone}.

Let $T:= U + v  G_0 v$, and recall \eqref{eq:Gamma def}, we have the following expansions. 
\begin{lemma}\label{lem:M_exp} For  $0<\lambda<1$ define  $M^{\pm}(\lambda) = U + v R^{\pm}(H_0, \lambda^4) v $. Let $P=v\langle \cdot, v\rangle \|V\|_1^{-1}$ denote the orthogonal projection onto the span of $v$.  We have
	\begin{align}  
	\label{Mexp} M^{\pm}(\lambda)&= A^{\pm}(\lambda) +  M_0^\pm (\lambda),\\ 
	\label{Apm}
	A^{\pm}(\lambda)&= \frac{\|V\|_1 a^\pm}{\lambda} P+T,
	\end{align} 
	where $T:=U+vG_0v$ and
	$M_0^\pm (\lambda)=v \mathcal E_1(\lambda) v=\Gamma_{\gamma}$, for any $0\leq \gamma \leq  1$,  provided that $v(x)\les \la x \ra^{-\frac{5}{2}-\gamma-}$.  Where $\mathcal E_0(\lambda)$ is the operator from \eqref{eq:E0 bd}.
	Moreover,  
	\be\label{M_0}
	M_0^\pm(\lambda)=a_1^\pm \lambda vG_1 v+ \Gamma_{1+\gamma}, \qquad 0\leq \gamma \leq 2
	\ee
	provided that $v(x)\les \la x \ra^{-\frac{7}{2}-\gamma-}$. Here the operators and the error term are  Hilbert-Schmidt, and hence absolutely bounded operators. 
	
\end{lemma}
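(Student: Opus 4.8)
The plan is to expand $M^\pm(\lambda) = U + vR^\pm(H_0,\lambda^4)v$ directly by inserting the resolvent expansion \eqref{eq:R0low} sandwiched between the factors $v(x)$ and $v(y)$. First I would write $R^\pm(H_0,\lambda^4)(x,y) = \frac{a^\pm}{\lambda} + G_0(x,y) + \mathcal E_1(\lambda)(x,y)$ from \eqref{eq:R0low}. The leading term $\frac{a^\pm}{\lambda}$ becomes $\frac{a^\pm}{\lambda} v(x)v(y)$, which is exactly the rank-one operator $\frac{\|V\|_1 a^\pm}{\lambda} P$ since $P = v\langle\cdot,v\rangle\|V\|_1^{-1}$ has kernel $\|V\|_1^{-1}v(x)v(y)$. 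The constant term $G_0$ contributes $vG_0v$, and combined with $U$ this gives $T = U + vG_0v$. This establishes \eqref{Mexp}--\eqref{Apm} with $M_0^\pm(\lambda) = v\mathcal E_1(\lambda)v$; the remaining work is to verify the claimed $\Gamma_\gamma$ bounds on this error operator.

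Next I would control $M_0^\pm(\lambda) = v\mathcal E_1(\lambda)v$ using the pointwise bound \eqref{eq:E1 bd}, namely $\mathcal E_1(\lambda) = O_2(\lambda^\gamma|x-y|^{1+\gamma})$ for $0\le\gamma\le1$. Since $|x-y|^{1+\gamma} \lesssim \langle x\rangle^{1+\gamma}\langle y\rangle^{1+\gamma}$, the kernel of $v\mathcal E_1(\lambda)v$ and its first two $\lambda$-derivatives are dominated, respectively, by $\lambda^\gamma$, $\lambda^{\gamma-1}$, $\lambda^{\gamma-2}$ times $v(x)\langle x\rangle^{1+\gamma}v(y)\langle y\rangle^{1+\gamma}$. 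To conclude $M_0^\pm(\lambda) = \Gamma_\gamma$ I need this kernel (and those of the derivatives) to define a Hilbert–Schmidt operator on $L^2(\R^3)$, which requires $v(x)\langle x\rangle^{1+\gamma} \in L^2(\R^3)$, i.e. the decay rate $v(x)\lesssim\langle x\rangle^{-\frac52-\gamma-}$ must beat $\langle x\rangle^{-\frac32}$ after multiplying by $\langle x\rangle^{1+\gamma}$ — exactly the stated hypothesis $v(x)\lesssim\langle x\rangle^{-\frac52-\gamma-}$. Finite-rank and Hilbert–Schmidt operators are absolutely bounded by the remarks in the excerpt, so the $\Gamma_\gamma$ bound \eqref{eq:Gamma def} holds. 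The second expansion \eqref{M_0} is handled identically: pull off the next term $a_1^\pm\lambda G_1 = a_1^\pm\lambda|x-y|^2$ from $\mathcal E_1(\lambda)$ via the relation $\mathcal E_1(\lambda) = a_1^\pm\lambda G_1 + \mathcal E_2(\lambda)$ in \eqref{eq:R0low}, so that $M_0^\pm(\lambda) = a_1^\pm\lambda vG_1v + v\mathcal E_2(\lambda)v$, and then bound $v\mathcal E_2(\lambda)v$ using \eqref{eq:E2 bd}: $\mathcal E_2(\lambda) = O_2(\lambda^{1+\gamma}|x-y|^{2+\gamma})$ for $0\le\gamma\le2$, giving the $\Gamma_{1+\gamma}$ estimate provided $v(x)\langle x\rangle^{2+\gamma}\in L^2$, i.e. $v(x)\lesssim\langle x\rangle^{-\frac72-\gamma-}$.

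The main (and essentially only) subtlety is the bookkeeping near $\lambda|x-y|\gtrsim1$: the expansions in \eqref{eq:R0low} are derived from the series for $e^{\pm i\lambda|x-y|}$, which is only a genuine asymptotic expansion when $\lambda|x-y|\lesssim1$. I would need to invoke the observations recorded just after \eqref{adef} and in \eqref{eq:E0 bd}--\eqref{eq:E2 bd}, namely that in the regime $\lambda|x-y|\gtrsim1$ one still has $|\partial_\lambda^2 R^\pm(H_0,\lambda^4)(x,y)| \lesssim \frac{|x-y|}{\lambda^2}(\lambda|x-y|)^\ell$ for any $\ell\ge0$, so that by choosing $\ell$ appropriately the error terms $\mathcal E_j(\lambda)$ continue to satisfy the stated $O_2$-bounds with weights $|x-y|^{1+\gamma}$ or $|x-y|^{2+\gamma}$ uniformly in $\lambda\lesssim1$. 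Once those pointwise bounds are in hand, the passage to Hilbert–Schmidt norms on $L^2$ is routine, and one should carefully note that the $\gamma=0$ endpoint of the first estimate and the fact that $T$ itself is absolutely bounded (since $vG_0v$ has kernel $-\frac{1}{8\pi}v(x)|x-y|v(y)$, Hilbert–Schmidt under the hypothesis) complete the proof.
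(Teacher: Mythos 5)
Your proposal is correct and follows essentially the same route as the paper, which simply inserts the expansion \eqref{eq:R0low} into the definition of $M^\pm(\lambda)$ and invokes the bounds \eqref{eq:E1 bd}, \eqref{eq:E2 bd} to obtain the Hilbert--Schmidt (hence absolutely bounded) error estimates; your identification of the decay thresholds on $v$ and your handling of the $\lambda|x-y|\gtrsim 1$ regime match the paper's setup. No gaps.
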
 	

\begin{proof} 
	Using the expansion \eqref{eq:R0low} and the bounds on $\mathcal E_1(\lambda)$ and $\mathcal E_2(\lambda)$ in \eqref{eq:E1 bd} and \eqref{eq:E2 bd} respectively in the definition of $M^{\pm}(\lambda)$ and $M^{\pm}_0(\lambda)$ suffices to establish the bounds.
	
\end{proof}

The definition below classifies the type of resonances considered in this paper at the threshold energy. A more detailed study of the remaining types of resonances may be found in Definition~4.2 and Section~7 of \cite{egt}.
\begin{defin} \label{def:restype} 
	
	\begin{enumerate}[i)]
		
		\item  Let $Q:=I-P$. Zero is a regular point of the spectrum of $\Delta^2 +V$ provided $QTQ$ is invertible on $QL^2$. In that case we define $D_0:=(QTQ)^{-1}$ as an absolutely bounded operator on $QL^2$ provided $|V(x)|\les \la x\ra^{-5-}$ by Lemma~4.3 in \cite{egt}.  
		
		\item  Assume that zero is not regular point of the spectrum. Let $S_1$ be the Riesz projection onto the kernel of $QTQ$. Then $QTQ+S_1$ is invertible on $QL^2$. Accordingly,  we
		define $D_0 = (QT Q + S_1)^{-1}$,  as an operator on $QL^2$.  (This agrees with the previous definition since $S_1=0$  when zero is regular.) We say there is
		a resonance of the first kind at zero if the operator  $ T_1:=S_1TPTS_1- \frac{\|V\|_1}{3(8\pi)^2}  S_1vG_1vS_1$ is invertible
		on $S_1L^2$. 
		
	\end{enumerate}
\end{defin}

Note that $T$ is a compact perturbation of $U$.  Hence, the Fredholm alternative guarantees that $S_1$ is a finite-rank projection. Also, $  S_1 \leq Q $ and $Qv=0$, hence   $S_1v=0$. Second, since $T$ is a self-adjoint operator and $S_1$ is the Riesz projection onto its kernel, we have  $S_1 D_0= D_0 S_1 = S_1$.   

Recall from \eqref{Mexp} that $M^\pm(\lambda) =   A^\pm(\lambda) + M_0^\pm(\lambda)$. 
If zero is regular then we have the following expansion for $(A^\pm(\lambda))^{-1}$.
\begin{lemma}\label{regular}  \cite[Lemma~4.5]{egt} Let $ 0 < \lambda \ll 1$. If zero is regular point of the spectrum of $H$. Then, we have
	\begin{align} \label{M inverse regular}
	(A^\pm(\lambda))^{-1}= QD_0Q + g^{\pm}(\lambda)  S, 
	\end{align}
	where $g^{\pm}(\lambda)=(\frac{ a^\pm \|V\|_1}{\lambda} +c)^{-1} $ for some $c\in \R$, and
	\begin{align} \label{def:S}
	S =\left[\begin{array}{cc}
	P& -PTQD_0Q\\ -QD_0QTP &QD_0QTPTQD_0Q
	\end{array} \right],
	\end{align}
	is a self-adjoint, finite rank operator.
	Moreover, the same formula holds for $(A^\pm(\lambda)+S_1)^{-1}$ with $D_0=(Q(T+S_1)Q)^{-1}$ if   zero is not regular. 
\end{lemma}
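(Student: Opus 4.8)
The plan is to invert $A^\pm(\lambda)=\frac{\|V\|_1 a^\pm}{\lambda}P+T$ from \eqref{Apm} by a Feshbach--Schur (block) reduction relative to the orthogonal decomposition $L^2=PL^2\oplus QL^2$, using crucially that $P$ is rank one and that the $Q$-corner is precisely the invertible operator $QTQ$. First I would record the block form: writing $\alpha:=\|V\|_1 a^\pm/\lambda$, the singular term sits entirely in the $PP$-corner since $\alpha P=P(\alpha P)P$, so
\[
A^\pm(\lambda)=\begin{pmatrix} \alpha P+PTP & PTQ\\ QTP & QTQ\end{pmatrix}
\]
as an operator on $PL^2\oplus QL^2$. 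Regularity of zero (Definition~\ref{def:restype}) gives $(QTQ)^{-1}=QD_0Q$ on $QL^2$, so I would eliminate the $Q$-block and form the Schur complement of the $PP$-corner,
\[
\Sigma:=\alpha P+PTP-PTQ\,(QD_0Q)\,QTP.
\]

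The decisive simplification is that $P=\|V\|_1^{-1}v\langle\,\cdot\,,v\rangle$ has rank one, so any operator from $PL^2$ to $PL^2$ is a scalar multiple of $P$. A direct computation gives $PTP=\|V\|_1^{-1}\langle v,Tv\rangle P$ and $PTQD_0QTP=\|V\|_1^{-1}\langle v,TQD_0QTv\rangle P$; both scalars are real because $U$, $v$, $G_0$ — hence $T$ and $D_0$ — are real and self-adjoint, and neither depends on $\lambda$ or on the choice of sign. Thus $\Sigma=(\alpha+c)P$ with $c\in\R$ fixed, and $g^\pm(\lambda):=(\alpha+c)^{-1}=(\|V\|_1 a^\pm/\lambda+c)^{-1}$ is exactly the scalar in the statement. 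I would note here that $\Sigma$ is invertible on $PL^2$ for every $\lambda>0$: since $a^\pm=(1\pm i)/(8\pi)$ has nonzero imaginary part, $\mathrm{Im}(\alpha+c)=\pm\|V\|_1/(8\pi\lambda)\neq 0$, so we never divide by zero and $g^\pm(\lambda)$ is well defined throughout $0<\lambda\ll1$.

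Next I would substitute into the standard block-inverse formula
\[
(A^\pm(\lambda))^{-1}=\begin{pmatrix} \Sigma^{-1} & -\Sigma^{-1}(PTQ)(QD_0Q)\\ -(QD_0Q)(QTP)\Sigma^{-1} & QD_0Q+(QD_0Q)(QTP)\Sigma^{-1}(PTQ)(QD_0Q)\end{pmatrix},
\]
and read off the entries using $\Sigma^{-1}=g^\pm P$, $P\cdot PTQ=PTQ$, $P^2=P$, and $D_0=QD_0Q$. The $PP$, $PQ$, $QP$ entries and the $g^\pm$-proportional part of the $QQ$ entry reassemble precisely into $g^\pm(\lambda)S$ with $S$ as in \eqref{def:S}, while the remaining $QQ$ piece is $QD_0Q$, which acts as such on all of $L^2$ because it annihilates $PL^2$; this is \eqref{M inverse regular}. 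Self-adjointness and finite rank of $S$ then follow from the factorization $S=JJ^*$ with $J:=(I-QD_0QT)P$, whose expansion $JJ^*=(P-QD_0QTP)(P-PTQD_0Q)$ reproduces the four blocks and which has rank at most one.

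Finally, for the non-regular case I would check that adjoining $S_1$ perturbs only the $Q$-corner: since $S_1\le Q$ and $S_1v=0$ give $PS_1=S_1P=0$, the operator $A^\pm(\lambda)+S_1$ keeps the same $PP$- and off-diagonal blocks, while its $QQ$-corner becomes $Q(T+S_1)Q$, invertible with inverse $D_0=(Q(T+S_1)Q)^{-1}$; the identical computation yields the same formula \eqref{M inverse regular}. The main obstacle is organizational rather than analytic — maintaining consistent projector bookkeeping (the identities $P\cdot P=P$, $QD_0Q\,QTP=QD_0QTP$, and the placement of $P$ and $Q$ factors) so that the four block entries match \eqref{def:S} on the nose. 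The one genuinely substantive point is confirming that the $PP$ Schur complement collapses to a \emph{real}, $\lambda$- and sign-independent shift $c$, which is exactly what isolates the $1/\lambda$ singularity cleanly into the scalar $g^\pm(\lambda)$.
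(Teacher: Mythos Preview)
Your argument is correct. The paper does not give its own proof of this lemma but simply cites \cite[Lemma~4.5]{egt}; your Feshbach--Schur block reduction is precisely the standard way such formulas are derived (and is the same mechanism underlying Lemma~\ref{JNlemma}, which the paper invokes for the closely related inversion of $M^\pm(\lambda)$), so your approach almost certainly coincides with that of the cited reference. Your observation that $S=JJ^*$ with $J=(I-QD_0QT)P$, hence rank at most one, is a slight sharpening of the stated ``finite rank'' conclusion.
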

This lemma and the definition preceeding it are of use in Section~\ref{sec:swave} when we consider a resonance of the first kind.  For the rest of this section, we only consider the case when zero is regular and $S_1=0$.  With the assumption that $ v(x) \les \la x\ra^{-\frac72-}$, we are able to conclude from Lemma~\ref{lem:M_exp} that $M_0^\pm(\lambda) = \Gamma_{1}$.

By Lemma~\ref{regular}, $A^+(\lambda)^{-1}=\Gamma_0$, and then
\begin{align}\label{eq:Minv exp reg}
M^+(\lambda)^{-1} &= A^+(\lambda)^{-1} - A^+(\lambda)^{-1}M_0^+(\lambda)(I + A^+(\lambda)^{-1}M_0)^{-1} A^+(\lambda)^{-1}\nn \\
&=A^+(\lambda)^{-1} - A^+(\lambda)^{-1}\Gamma_{1} A^+(\lambda)^{-1}.
\end{align}

Another application of Lemma~\ref{regular} leads
\begin{multline}\label{eq:RV exp reg}
R_V^+(\lambda^4) = R^+(H_0,\lambda^4) - R^+(H_0,\lambda^4)vA^+(\lambda)^{-1}vR^+(H_0,\lambda^4)\\ +R^+(H_0,\lambda^4)vA^+(\lambda)^{-1}\Gamma_{1}A^+(\lambda)^{-1}vR^+(H_0,\lambda^4)
\end{multline}
There is  significant cancellation in that the leading part of $R^+(H_0,\lambda^4)v$ is orthogonal to the range of $Q$.

\begin{lemma} \label{lem:RvQ}
	If $v(x) \les \la x\ra^{-\frac{5}{2}-}$, then $R^+(H_0,\lambda^4)vQ$ is $O_2(1)$ as an operator from $L^2(\R^3)$ to
	$L^{\infty, -1}(\R^3)$.
\end{lemma}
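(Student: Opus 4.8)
The plan is to peel off, via \eqref{eq:R0low}, the singular leading term $a^+/\lambda$ of the free resolvent kernel, observe that this term is annihilated by $Q$ acting on the right, and then estimate the surviving remainder $\mathcal E_0(\lambda)$ directly between the weighted spaces.

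First I would record that $Q=I-P$ with $P=v\la\,\cdot\,,v\ra\|V\|_1^{-1}$, and that $\|v\|_{L^2}^2=\int_{\R^3}|V|=\|V\|_1$, so $Pv=v$ and hence $Qv=0$. Writing $R^+(H_0,\lambda^4)(x,y)=\frac{a^+}{\lambda}+\mathcal E_0(\lambda)(x,y)$ as in \eqref{eq:R0low}, for $f\in L^2(\R^3)$ we then have
\[
\big(R^+(H_0,\lambda^4)vQf\big)(x)=\frac{a^+}{\lambda}\la Qf,v\ra+\int_{\R^3}\mathcal E_0(\lambda)(x,y)\,v(y)\,(Qf)(y)\,dy,
\]
and the first term vanishes since $\la Qf,v\ra=\la f,Qv\ra=0$. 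Expanding $Qf=f-\|V\|_1^{-1}\la f,v\ra v$ in the remaining integral yields the representation
\[
\big(R^+(H_0,\lambda^4)vQf\big)(x)=\int_{\R^3}\mathcal E_0(\lambda)(x,y)\,v(y)\,f(y)\,dy-\frac{\la f,v\ra}{\|V\|_1}\int_{\R^3}\mathcal E_0(\lambda)(x,y)\,v(y)^2\,dy.
\]
This cancellation is the essential content of the lemma: without the projection $Q$ the term $\frac{a^+}{\lambda}v(y)$ would force $R^+(H_0,\lambda^4)v$ to blow up like $\lambda^{-1}$ between any fixed spaces.

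It then remains to bound the right-hand side, together with its first two $\lambda$-derivatives, uniformly in $\lambda>0$. By \eqref{eq:E0 bd}, $\mathcal E_0(\lambda)=O_2(|x-y|)$, so $|\partial_\lambda^j\mathcal E_0(\lambda)(x,y)|\les\lambda^{-j}|x-y|\les\lambda^{-j}\la x\ra\la y\ra$ for $j=0,1,2$. Substituting this into the displayed identity and applying Cauchy--Schwarz in $y$,
\[
\la x\ra^{-1}\big|\partial_\lambda^j\big(R^+(H_0,\lambda^4)vQ\big)f(x)\big|\les\lambda^{-j}\big(\|\la\,\cdot\,\ra v\|_{L^2}+\|\la\,\cdot\,\ra^{\frac12}v\|_{L^2}^2\big)\|f\|_{L^2},
\]
where the output weight $\la x\ra^{-1}$ exactly cancels the factor $\la x\ra$ produced by $|x-y|\les\la x\ra\la y\ra$. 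The coefficient on the right is finite precisely when $\la\,\cdot\,\ra v\in L^2(\R^3)$, i.e. when $v(x)\les\la x\ra^{-\frac52-}$, this being the more demanding of the two norms. Multiplying the $j=0,1,2$ bounds by $1,\lambda,\lambda^2$ respectively and adding, we obtain $R^+(H_0,\lambda^4)vQ=O_2(1)$ as an operator $L^2(\R^3)\to L^{\infty,-1}(\R^3)$, as claimed.

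I do not expect any genuine obstacle: the argument is the identity $Qv=0$ followed by routine weighted bookkeeping, and the decay hypothesis $v(x)\les\la x\ra^{-\frac52-}$ is dictated exactly by the requirement $\la\,\cdot\,\ra v\in L^2(\R^3)$ coming from the first of the two integrals above.
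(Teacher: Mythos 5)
Your proof is correct and follows essentially the same route as the paper's: both exploit the orthogonality of $QL^2$ to $v$ to cancel the singular $\frac{a^+}{\lambda}$ term, then bound the remainder $\mathcal E_0(\lambda)=O_2(|x-y|)\les O_2(\la x\ra\la y\ra)$ using $\la\cdot\ra v\in L^2(\R^3)$. The explicit expansion of $Qf$ and the second integral are just a more verbose form of the same bookkeeping.
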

\begin{proof}
	For all $\psi \in QL^2$, $\int v(y)\psi(y)\,dy = 0$, hence $R^+(H_0,\lambda^4)vQ = (R^+(H_0,\lambda^4) - \frac{a^+}{\lambda})vQ$.  It follows from \eqref{eq:R0low} and \eqref{eq:E0 bd} that $(R^+(H_0,\lambda^4) - \frac{a^+}{\lambda}) = O_2(|x-y|)$,
	which is also $O_2(\la x\ra \la y \ra)$. The result follows as long as $\la y\ra v \in L^2(\R^3)$.
\end{proof}

We first consider the final term in \eqref{eq:RV exp reg}.
\begin{lemma} \label{lem:RvA}
	The kernel of $g^+(\lambda)R^+(H_0,\lambda^4)$ is $O_2(\la x\ra \la y\ra)$. As a consequence, if $v(x) \les \la x\ra^{-\frac{5}{2}-}$ then $R^+(H_0,\lambda^4)vA^+(\lambda)^{-1} = O_2(1)$ as an operator from $L^2(\R^3)$ to $L^{\infty,-1}(\R^3)$, and 
	$$
	R^+(H_0,\lambda^4)vA^+(\lambda)^{-1}\Gamma_1A^+(\lambda)^{-1}vR^+(H_0,\lambda^4) = O_2(\lambda\la x\ra\la y\ra).
	$$
\end{lemma}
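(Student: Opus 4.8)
The plan is to combine the decomposition of $R^+(H_0,\lambda^4)$ from \eqref{eq:R0low} with the structure of $(A^+(\lambda))^{-1}$ from Lemma~\ref{regular}, exploiting the fact that the singular $\lambda^{-1}$ piece of $R^+(H_0,\lambda^4)$ is paired with the $g^+(\lambda)$ piece of $(A^+(\lambda))^{-1}$, which decays like $\lambda$. First I would establish the claimed bound on $g^+(\lambda)R^+(H_0,\lambda^4)$. Recall $g^+(\lambda)=(\frac{a^+\|V\|_1}{\lambda}+c)^{-1}$; a direct computation shows $g^+(\lambda)=O_2(\lambda)$ on $0<\lambda\ll 1$, since $g^+$ is a smooth function of $\lambda$ vanishing to first order at $\lambda=0$. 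From \eqref{eq:R0low}, $R^+(H_0,\lambda^4)=\frac{a^+}{\lambda}+G_0+\mathcal E_1(\lambda)$. Multiplying through: $g^+(\lambda)\cdot\frac{a^+}{\lambda}=O_2(1)$ is a scalar (constant-kernel) operator bounded by $\la x\ra\la y\ra$; $g^+(\lambda)G_0=O_2(\lambda)G_0(x,y)$ and $|G_0(x,y)|\les |x-y|\les\la x\ra\la y\ra$; and $g^+(\lambda)\mathcal E_1(\lambda)=O_2(\lambda)\cdot O_2(|x-y|^{1})$ using \eqref{eq:E1 bd} with $\gamma=0$, which is likewise $O_2(\la x\ra\la y\ra)$ (with room to spare in $\lambda$). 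Summing gives $g^+(\lambda)R^+(H_0,\lambda^4)=O_2(\la x\ra\la y\ra)$ as claimed. Note the product rule for $O_2$ classes is used freely here, as is standard in this paper.

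Next I would handle $R^+(H_0,\lambda^4)vA^+(\lambda)^{-1}$. By Lemma~\ref{regular}, $A^+(\lambda)^{-1}=QD_0Q+g^+(\lambda)S$. For the first summand, since $QD_0Q$ maps into $QL^2$ and $Q$ sits on the left, the factor $R^+(H_0,\lambda^4)vQ$ appears, which by Lemma~\ref{lem:RvQ} is $O_2(1):L^2\to L^{\infty,-1}$; composing with the $\lambda$-independent absolutely bounded operator $D_0Q$ on $L^2$ preserves this. For the second summand $g^+(\lambda)S$, write $R^+(H_0,\lambda^4)v\cdot g^+(\lambda)S=\big(g^+(\lambda)R^+(H_0,\lambda^4)\big)vS$; by the first part of the lemma the parenthesized kernel is $O_2(\la x\ra\la y\ra)$, and since $\la y\ra v\in L^2$ (guaranteed by $v(x)\les\la x\ra^{-5/2-}$) and $S$ is a finite-rank hence bounded operator on $L^2$, this composition is $O_2(1):L^2\to L^{\infty,-1}$. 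Adding the two contributions yields $R^+(H_0,\lambda^4)vA^+(\lambda)^{-1}=O_2(1):L^2\to L^{\infty,-1}$.

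Finally, the three-factor expression. Write it as
\[
\big(R^+(H_0,\lambda^4)vA^+(\lambda)^{-1}\big)\,\Gamma_1\,\big(A^+(\lambda)^{-1}vR^+(H_0,\lambda^4)\big).
\]
The outer factors are each $O_2(1):L^2\to L^{\infty,-1}$ by the previous step (the right-hand factor being the transpose of the left, using the symmetry of $R^+(H_0,\lambda^4)$ and $A^+(\lambda)^{-1}$, or equivalently the analogue of Lemma~\ref{lem:RvQ} with the roles of the variables swapped). The middle factor $\Gamma_1$, by definition \eqref{eq:Gamma def}, is absolutely bounded on $L^2$ with $\||\Gamma_1|\|_{L^2\to L^2}+\lambda\||\partial_\lambda\Gamma_1|\|+\lambda^2\||\partial_\lambda^2\Gamma_1|\|\les\lambda$. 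Composing the kernels and applying Cauchy–Schwarz in the intermediate variables: the $x$-dependence contributes a factor $\la x\ra$, the $y$-dependence $\la y\ra$, and the $L^2\to L^2$ bound on $\Gamma_1$ contributes the factor $\lambda$. Distributing the two $\lambda$-derivatives across the three factors via the Leibniz rule, each term is controlled since the outer factors are $O_2$-class (two derivatives available, each costing $\lambda^{-1}$) and $\Gamma_1$ satisfies the matching weighted derivative bounds; the worst case places both derivatives on the middle factor, giving $\lambda^2\cdot\lambda^{-1}=\lambda$, consistent with the claim. Hence the product is $O_2(\lambda\la x\ra\la y\ra)$.

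The main obstacle is purely bookkeeping: one must verify that the product-rule and composition operations for the $O_2$ and $\Gamma_\theta$ symbol classes behave as expected — in particular that composing an $O_2(1):L^2\to L^{\infty,-\sigma}$ operator with an absolutely bounded $\Gamma_\theta$ on $L^2$ and then another such operator genuinely produces an $O_2(\lambda^\theta\la x\ra^\sigma\la y\ra^\sigma)$ kernel, including for the second derivative where the Leibniz expansion has the most terms. There is no deep difficulty, only the need to track weights and powers of $\lambda$ carefully and to confirm the decay hypothesis $v(x)\les\la x\ra^{-5/2-}$ is exactly what makes $\la\cdot\ra v\in L^2(\R^3)$, which is what all the $L^2$ pairings require.
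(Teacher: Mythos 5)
Your proposal is correct and follows essentially the same route as the paper: bound $g^+(\lambda)R^+(H_0,\lambda^4)$ by pairing the $O_2(\lambda)$ scalar $g^+$ against the $\frac{a^+}{\lambda}$ singularity and the $O_2(|x-y|)$ remainder, split $A^+(\lambda)^{-1}=QD_0Q+g^+(\lambda)S$ and use Lemma~\ref{lem:RvQ} for the $Q$ piece, then compose the two outer $O_2(1)$ factors (one viewed as $L^2\to L^{\infty,-1}$, its transpose as $L^{1,1}\to L^2$) with the absolutely bounded $\Gamma_1$ in the middle. The extra Leibniz-rule bookkeeping you spell out is implicit in the paper's symbol-class conventions and does not change the argument.
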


\begin{proof}
	The scalar function $g^+(\lambda) =\lambda(a^+\|V\|_1 + c\lambda)^{-1}$ is both $O_2(\lambda)$ and $O_2(1)$.  Recalling \eqref{eq:E0 bd}, we see that $R^+(H_0,\lambda^4) = \frac{a^+}{\lambda} + O_2(|x-y|)$.  Then the products
	$\frac{a^+g^+(\lambda)}{\lambda} = O_2(1)$ and $O_2(|x-y|)g^+(\lambda) = O_2(|x-y|)$, and both of these are $O_2(\la x\ra \la y\ra)$.
	
	Recall that $R^+(H_0,\lambda^4)vA^+(\lambda)^{-1} = R^+(H_0,\lambda^4)v(QD_0Q + g(\lambda)S)$.  The first part of the product is $O_2(1)$ as a map from $L^2(\R^3)$ to $L^{\infty, -1}(\R^3)$ by Lemma~\ref{lem:RvQ}.  The second part of the product, using \eqref{eq:E0 bd}, satisfies the same bounds since $g^+(\lambda)R^+(H_0,\lambda^4) = O_2(\la x\ra \la y\ra)$ and $\la y\ra v \in L^2(\R^3)$.
	
	Finally, the operator $A^+(\lambda)^{-1}vR^+(H_0,\lambda^4)$ is $O_2(1)$ as a map from $L^{1,1}(\R^3)$ to
	$L^2(\R^3)$.  So the composition of these pieces together with $\Gamma_1$ is an $O_2(\lambda)$ map from $L^{1,1}(\R^3)$ to $L^{\infty,-1}(\R^3)$, or in other words it is $O_2(\lambda\la x\ra\la y\ra)$.
\end{proof}

Before proceeding further, we recall from \eqref{eq:R0low} that the expansion of the free resolvent $R^+(H_0,\lambda^4)$ is $\frac{a^+}{\lambda} + G_0 + O_2(\lambda|x-y|^2)$.  Its remainder term is worse than the $O_2(\lambda\la x\ra \la y\ra)$ required for Proposition~\ref{prop:low reg}.  Controlling the first two terms of~\eqref{eq:RV exp reg} requires finding additional cancellation within $\mathcal E_1 = O_2(\lambda|x-y|^2)$.
Using \eqref{eq:4th resolvdef} and \eqref{eq:R0low}, the kernel of $\mathcal{E}_1(\lambda)$ has the form
\be\label{eq:E1 K}
\mathcal{E}_1(\lambda, x, y) = \frac{K(\lambda|x-y|)}{\lambda},\qquad
K(z) = \frac{e^{iz} - e^{-z}}{8\pi z} - a^+ + \frac{z}{8\pi}.
\ee
The functions $K_1(z) = zK'(z)$ and $K_2(z) = zK_1'(z)$ are useful in a moment.  By the Leibniz rule,
\begin{align*}
\frac{d}{d\lambda}\mathcal E_1(\lambda,x,y) &=\frac{-K(\lambda|x-y|) + K_1(\lambda|x-y|)}{\lambda^2}, \\
\frac{d^2}{d\lambda^2}\mathcal E_1(\lambda,x,y) &= \frac{2K(\lambda|x-y|)-3K_1(\lambda|x-y|) + K_2(\lambda|x-y|)}{\lambda^3}.
\end{align*}

All three functions have the property
$|K_j(z)| \les z^2$ and $|K_j'(z)| \les z$ for all positive $z$.  We note that the bound $|K_j(z)|\les z$ is also true.
By the Mean Value Theorem,
\begin{equation} \label{eq:Kjbound1}
|K_j(\lambda|x-y|) - K_j(\lambda |x|)| \les \lambda^2 \la y \ra\max(|x-y|, |x|) \les \lambda^2  \la y \ra\max(\la x\ra, \la y \ra).
\end{equation}  
We may insert $K_j(\lambda|y|)$ into this estimate as below without changing the upper bound, 
$$
\big| K_j(\lambda|x-y|) - K_j(\lambda|x|) - K_j(\lambda|y|)\big| \les \lambda^2  \la y \ra\max(\la x\ra, \la y \ra).
$$  
However the left side is symmetric in $x$ and $y$. thus it is also bounded by $\lambda^2\la x\ra \max(\la x\ra, \la y \ra)$. We may conclude that
\begin{equation} \label{eq:Kjbound2}
\big| K_j(\lambda|x-y|) - K_j(\lambda|x|) - K_j(\lambda|y|)\big| \les \lambda^2\min(\la x\ra, \la y \ra )\max(\la x\ra, \la y \ra) = \lambda^2\la x\ra  \la y \ra.
\end{equation}

\begin{lemma} \label{lem:E_1vQ}
	If $v(x) \les \la x\ra^{-\frac{7}{2}-}$, then $\mathcal E_1vQ$ is $O_2(\lambda)$ as an operator from $L^2(\R^3)$ to
	$L^{\infty, -1}(\R^3)$.
\end{lemma}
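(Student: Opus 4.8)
The plan is to exploit the vanishing moment property of $Q$ exactly as in the proof of Lemma~\ref{lem:RvQ}, but now one subtraction is not enough: since $\mathcal E_1 = O_2(\lambda|x-y|^2)$ has a genuinely $\la x\ra^2\la y\ra^2$-sized kernel, subtracting only the $y$-independent piece would leave a remainder that is $O_2(\lambda\la x\ra\la y\ra^2)$, which does not close because $\la y\ra^2 v$ need not be in $L^2$ when $v(x)\les\la x\ra^{-7/2-}$. The key observation is that we may subtract off \emph{two} $y$-independent terms at once. Writing the kernel as $\mathcal E_1(\lambda,x,y) = K(\lambda|x-y|)/\lambda$ from \eqref{eq:E1 K}, for any $\psi\in QL^2$ we have $\int v(y)\psi(y)\,dy = 0$, so
$$
(\mathcal E_1 vQ\psi)(x) = \frac1\lambda\int_{\R^3}\big[K(\lambda|x-y|) - K(\lambda|x|) - K(\lambda|y|)\big]v(y)\psi(y)\,dy,
$$
since the constant-in-$y$ term $-K(\lambda|x|)/\lambda$ integrates against $vQ\psi$ to zero, and the term $-K(\lambda|y|)/\lambda$ is likewise harmless — it only needs to be integrable against $|v\psi|$, which holds since $|K(\lambda|y|)|\les\lambda|y|\les\lambda\la y\ra$ and $\la y\ra v\in L^2$. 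Then \eqref{eq:Kjbound2} (applied with $K_0 = K$, which enjoys the same bounds $|K(z)|\les z$, $|K(z)|\les z^2$, $|K'(z)|\les z$ as the $K_j$) gives the pointwise bound on the bracketed kernel by $\lambda^2\la x\ra\la y\ra$, so $|(\mathcal E_1 vQ\psi)(x)| \les \lambda\la x\ra\int\la y\ra v(y)|\psi(y)|\,dy \les \lambda\la x\ra\|\la\cdot\ra v\|_2\|\psi\|_2$, i.e. $\mathcal E_1 vQ = O(\lambda)$ from $L^2$ to $L^{\infty,-1}$.

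For the $O_2$ claim I would repeat the argument for the first and second $\lambda$-derivatives. By the Leibniz-rule formulas displayed before the lemma, $\partial_\lambda\mathcal E_1$ and $\partial_\lambda^2\mathcal E_1$ are linear combinations of $K(\lambda|x-y|)$, $K_1(\lambda|x-y|)$, $K_2(\lambda|x-y|)$ divided by $\lambda^2$ and $\lambda^3$ respectively; in each case one subtracts the same two $y$-independent counterterms, the constant-in-$y$ one killed by $Q$ and the $K_j(\lambda|y|)$ one absorbed by $\la y\ra v\in L^2$, and then \eqref{eq:Kjbound2} bounds the remaining symmetrized kernel by $\lambda^2\la x\ra\la y\ra$. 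This produces $|\partial_\lambda\mathcal E_1 vQ\psi(x)|\les \la x\ra\|\la\cdot\ra v\|_2\|\psi\|_2$ and $|\partial_\lambda^2\mathcal E_1 vQ\psi(x)|\les \lambda^{-1}\la x\ra\|\la\cdot\ra v\|_2\|\psi\|_2$, which together with the $\lambda$-weights in the definition of $O_2$ is exactly $\mathcal E_1 vQ = O_2(\lambda)$ as a map $L^2\to L^{\infty,-1}$. The weight $v(x)\les\la x\ra^{-7/2-}$ is what makes $\la\cdot\ra v\in L^2(\R^3)$ (indeed $\la y\ra v\les\la y\ra^{-5/2-}$, which is square-integrable in three dimensions).

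The only real subtlety — the step I expect to need the most care — is the large-argument regime $\lambda|x-y|\gtrsim 1$, where the decomposition \eqref{eq:E1 K} of $\mathcal E_1$ and the Taylor-type estimates \eqref{eq:Kjbound1}–\eqref{eq:Kjbound2} must still be checked: one uses instead the crude bounds $|K_j(z)|\les z$ and $|K_j'(z)|\les z$ valid for all positive $z$, together with the fact (noted after \eqref{eq:E2 bd}) that in this range $|\partial_\lambda^2 R^+(H_0,\lambda^4)(x,y)|\les \lambda^{-2}|x-y|(\lambda|x-y|)^\ell$ for any $\ell\ge 0$, so that the $\la y\ra v$ weight still controls everything. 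Once both regimes are handled the estimate \eqref{eq:Kjbound2} is uniform in $\lambda$ on $0<\lambda<1$ and the lemma follows.
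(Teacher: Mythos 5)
Your starting premise is incorrect, and the extra subtraction it motivates introduces a genuine gap. First, the premise: under the hypothesis $v(x)\les\la x\ra^{-\frac72-}$ one has $\la y\ra^2 v(y)\les\la y\ra^{-\frac32-}\in L^2(\R^3)$, so the single-subtraction argument does close. This is exactly the paper's proof: for $\psi\in QL^2$ it writes $\mathcal E_1 v\psi(x)=\lambda^{-1}\int\big(K(\lambda|x-y|)-K(\lambda|x|)\big)v(y)\psi(y)\,dy$ and applies \eqref{eq:Kjbound1}, absorbing the resulting $\la y\ra\max(\la x\ra,\la y\ra)\le\la x\ra\la y\ra^2$ weight into $\la\cdot\ra^2 v\in L^2$. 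The $-\frac72-$ decay in the hypothesis is there precisely to make this step work; your claim that it fails is mistaken.

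Second, the gap: your displayed identity is false. The counterterm $K(\lambda|x|)/\lambda$ is constant in $y$ and is annihilated because $\int v\psi=0$, but $K(\lambda|y|)/\lambda$ is not constant in $y$, and $\int K(\lambda|y|)v(y)\psi(y)\,dy$ has no reason to vanish for $\psi\in QL^2$. You must therefore add back the term $\lambda^{-1}\int K(\lambda|y|)v(y)\psi(y)\,dy$, and the bound you invoke for it, $|K(z)|\les z$ together with $\la\cdot\ra v\in L^2$, yields only $O(1)$ --- not the required $O(\lambda)$ --- and a constant in $x$ is not helped by the $\la x\ra^{-1}$ weight of the target space. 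The term can be salvaged using $|K(z)|\les z^2$, which gives $\les\lambda\int|y|^2|v\psi|\,dy\les\lambda\|\la\cdot\ra^2v\|_2\|\psi\|_2$, but that requires $\la\cdot\ra^2 v\in L^2$, i.e.\ the very condition you set out to avoid and the one the paper's shorter proof already uses. (The symmetrized two-term subtraction controlled by \eqref{eq:Kjbound2} is genuinely needed elsewhere, in \eqref{eq:E1 diffs}, where the projection $P$ rather than $Q$ appears and the counterterms really do pair off; here it buys nothing.) Your treatment of the $\lambda$-derivatives and of the regime $\lambda|x-y|\gtrsim1$ is otherwise consistent with the paper.
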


\begin{proof}
	For all $\psi \in QL^2$, $\int v(y)\psi(y)\,dy = 0$, hence using \eqref{eq:E1 K},
	\begin{align*}
	|\mathcal E_1v\psi(x)| &= \Big|\int_{\R^3} \lambda^{-1} \big(K(\lambda|x-z|)- K(\lambda|x|)\big)v(z)\psi(z)\,dz\Big| \\
	&\les \lambda \int_{\R^3} \la z\ra \max(\la x\ra, \la z \ra)  v(z)  |\psi(z)|\,dz \les \la x\ra ,
	\end{align*} 
	provided $\la z\ra^2v \in L^2(\R^3)$.  Similar estimates can made for the derivatives of $\mathcal E_1(\lambda)$ using the bounds on
	$K_1$ and $K_2$. 
\end{proof}

For clarity in the calculations to follow, we denote $\mathbf 1$ to be the operator with integral kernel $\mathbf1(x,y)=1$.  That way $R^+(H_0,\lambda^4)$ is $\frac{a^+}{\lambda}\mathbf 1 + \mathcal E_0 = \frac{a^+}{\lambda}\mathbf 1 + G_0 + \mathcal E_1$.  The crucial step in the proof of Proposition~\ref{prop:low reg} is in the estimate below for the first two terms of~\eqref{eq:RV exp reg}.

\begin{lemma}\label{lem:leading terms}
	If $v(x) \les \la x\ra^{-\frac72-}$, then
	\begin{multline*}
	R^+(H_0,\lambda^4) - R^+(H_0,\lambda^4)vA^+(\lambda)^{-1}vR^+(H_0,\lambda^4) \\
	=
	G_0 - \Big[ G_0vQD_0QvG_0 + \frac{c}{\|V\|_1} - \frac{G_0vSv\mathbf{1} +\mathbf{1}vSvG_0}{\|V\|_1}  \Big] + O_2(\lambda\la x\ra\la y\ra)
	\end{multline*}
\end{lemma}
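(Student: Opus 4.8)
The plan is to substitute the expansion $R^+(H_0,\lambda^4)=\frac{a^+}{\lambda}\mathbf 1+G_0+\mathcal E_1(\lambda)$ from \eqref{eq:R0low} together with the inversion formula $A^+(\lambda)^{-1}=QD_0Q+g^+(\lambda)S$ from Lemma~\ref{regular}, multiply everything out, and read off the surviving non-error terms. Two algebraic facts drive the bookkeeping. First, $\mathbf 1vQ=Qv\mathbf 1=0$, because $\int v\psi=0$ for every $\psi\in QL^2$. Second, $\mathbf 1vSv\mathbf 1=\langle Sv,v\rangle\,\mathbf 1=\|V\|_1\,\mathbf 1$; indeed $Qv=0$ gives $Sv=v-QD_0QTv$ and hence $\langle Sv,v\rangle=\|v\|_2^2=\|V\|_1$. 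The hypothesis $v(x)\les\la x\ra^{-\f72-}$ is used only through $\la\cdot\ra^2 v\in L^2$, which is the integrability all the weighted $O_2$-estimates below rely on. With these in hand, the $QD_0Q$-term is immediate: by the first fact only the $G_0+\mathcal E_1$ part of $R^+(H_0,\lambda^4)$ reaches $vQD_0Qv$, so the contribution is $(G_0+\mathcal E_1)vQD_0Qv(G_0+\mathcal E_1)$, and since $\mathcal E_1vQ=O_2(\lambda)$ (Lemma~\ref{lem:E_1vQ}, and by duality $Qv\mathcal E_1=O_2(\lambda)$) while $G_0vQ$ is bounded with no $\lambda$-dependence (as in Lemma~\ref{lem:RvQ}), composing through the bounded operator $D_0$ leaves $G_0vQD_0QvG_0+O_2(\lambda\la x\ra\la y\ra)$.

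For the $g^+(\lambda)S$-term I would expand $R^+(H_0,\lambda^4)vSvR^+(H_0,\lambda^4)$ into nine pieces using the three-term form on each side, sorted by the power of $\lambda^{-1}$ generated by the $\frac{a^+}{\lambda}\mathbf 1$ factors. The $\lambda^{-2}$ piece is $g^+(\lambda)\frac{(a^+)^2}{\lambda^2}\,\mathbf 1vSv\mathbf 1=g^+(\lambda)\frac{(a^+)^2\|V\|_1}{\lambda^2}\mathbf 1$; since $g^+(\lambda)=(\frac{a^+\|V\|_1}{\lambda}+c)^{-1}$, expanding in $\lambda$ gives $g^+(\lambda)\frac{(a^+)^2\|V\|_1}{\lambda^2}=\frac{a^+}{\lambda}+(\text{const})+O_2(\lambda)$, so this exactly cancels the bare $\frac{a^+}{\lambda}\mathbf 1$ from the stand-alone resolvent and leaves a constant multiple of $\mathbf 1$. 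The $\lambda^{-1}$ pieces are $g^+(\lambda)\frac{a^+}{\lambda}\bigl(\mathbf 1vSv(G_0+\mathcal E_1)+(G_0+\mathcal E_1)vSv\mathbf 1\bigr)$ with $g^+(\lambda)\frac{a^+}{\lambda}=\frac1{\|V\|_1}+O_2(\lambda)$; as $\mathbf 1vSvG_0$ and $G_0vSv\mathbf 1$ are $\lambda$-independent and $O_2(\la x\ra\la y\ra)$, their $G_0$-parts produce exactly the $\frac1{\|V\|_1}(G_0vSv\mathbf 1+\mathbf 1vSvG_0)$ term of the claim plus $O_2(\lambda\la x\ra\la y\ra)$. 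The $\lambda^0$ piece $g^+(\lambda)(G_0+\mathcal E_1)vSv(G_0+\mathcal E_1)$ is $O_2(\lambda\la x\ra\la y\ra)$ because $g^+(\lambda)=O_2(\lambda)$ and the middle factor is $O_2(\la x\ra\la y\ra)$ — the $\mathcal E_1$'s contributing at worst $|x-y|\les\la x\ra\la y\ra$.

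The step I expect to be the main obstacle, and which the discussion preceding the lemma flags as the crucial one, is the $\mathcal E_1$-part of the $\lambda^{-1}$ pieces. A priori $\mathbf 1vSv\mathcal E_1$ has a kernel of size $\lambda\la y\ra^2$, far too large for $O_2(\lambda\la x\ra\la y\ra)$. The fix is to extract its leading behavior: using $\langle Sv,v\rangle=\|V\|_1$ together with the mean-value estimate behind \eqref{eq:Kjbound1} to replace $\mathcal E_1(\lambda,z,y)=\lambda^{-1}K(\lambda|z-y|)$ by $\mathcal E_1(\lambda,0,y)$ inside the $z$-integral at cost $O_2(\lambda\la x\ra\la y\ra)$, one obtains $\mathbf 1vSv\mathcal E_1=\|V\|_1\,\mathcal E_1(\lambda,y,0)+O_2(\lambda\la x\ra\la y\ra)$, and symmetrically $\mathcal E_1vSv\mathbf 1=\|V\|_1\,\mathcal E_1(\lambda,x,0)+O_2(\lambda\la x\ra\la y\ra)$. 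After multiplying by $\frac1{\|V\|_1}$ the leftover large terms are precisely $\mathcal E_1(\lambda,y,0)$ and $\mathcal E_1(\lambda,x,0)$; combined with the bare $\mathcal E_1(\lambda,x,y)$ from the stand-alone resolvent they assemble into
$$
\mathcal E_1(\lambda,x,y)-\mathcal E_1(\lambda,x,0)-\mathcal E_1(\lambda,y,0)=\frac{K(\lambda|x-y|)-K(\lambda|x|)-K(\lambda|y|)}{\lambda},
$$
which is $O_2(\lambda\la x\ra\la y\ra)$ by \eqref{eq:Kjbound2} applied to $K$, and — for the two $\lambda$-derivatives built into the $O_2$-class — to $K_1=zK'$ and $K_2=zK_1'$ via the Leibniz formulas recorded just before Lemma~\ref{lem:E_1vQ}. (The $O_2(\lambda)$ remainder of $g^+(\lambda)\frac{a^+}{\lambda}$ acting against $\mathbf 1vSv\mathcal E_1$ is harmless, since that operator also carries an $O_2(\la y\ra)$ bound; likewise for its transpose.) Collecting $G_0$, $-G_0vQD_0QvG_0$, the constant $\mathbf 1$-term, and $\frac1{\|V\|_1}(G_0vSv\mathbf 1+\mathbf 1vSvG_0)$, with everything else absorbed into $O_2(\lambda\la x\ra\la y\ra)$, yields the stated identity.
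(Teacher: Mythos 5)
Your proposal is correct and follows essentially the same route as the paper: the same decomposition via \eqref{M inverse regular}, the same use of Lemma~\ref{lem:E_1vQ} for the $QD_0Q$ block, the same power-series expansion of $g^+(\lambda)$, and the same crucial cancellation $K(\lambda|x-y|)-K(\lambda|x|)-K(\lambda|y|)$ controlled by \eqref{eq:Kjbound1} and \eqref{eq:Kjbound2}. The only cosmetic difference is that you keep $S$ and use $\langle Sv,v\rangle=\|V\|_1$ with a freeze-at-the-origin step, where the paper first reduces $S$ to $P$ and writes the identical cancellation as the explicit triple difference in \eqref{eqn:weird-cancellation}.
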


This ultimately explains why the statement of Theorem~\ref{thm:reg} has a lower order of polynomial weights than Theorem~\ref{thm:swave} or the evolution of the unperturbed solution operator.
\begin{proof}
	By \eqref{M inverse regular}, we may write
	\begin{multline}\label{eq:R born}
	R^+(H_0,\lambda^4) - R^+(H_0,\lambda^4)vA^+(\lambda)^{-1}vR^+(H_0,\lambda^4)\\
	=
	R^+(H_0,\lambda^4) -g^+(\lambda)R^+(H_0,\lambda^4)vSvR^+(H_0,\lambda^4)
	-R^+(H_0,\lambda^4)vQD_0QvR^+(H_0,\lambda^4)
	\end{multline}
	The last term can be further expanded as 
	\begin{multline*}
	(G_0+\mathcal E_1(\lambda))vQD_0Qv(G_0+ \mathcal E_1(\lambda))\\ 
	= G_0vQD_0QvG_0 + \mathcal E_1(\lambda)vQD_0Qv  G_0 + G_0vQD_0Qv\mathcal E_1(\lambda)+\mathcal E_1(\lambda)vQD_0Qv\mathcal E_1(\lambda).
	\end{multline*}  
	Here we used the orthogonality of $Q$ to the span of $v$.
	Using Lemma ~\ref{lem:E_1vQ}  as needed, we conclude that
	$$
	R^+(H_0,\lambda^4)vQD_0QvR^+(H_0,\lambda^4) = G_0vQD_0QvG_0 + O_2(\lambda\la x\ra\la y\ra).
	$$
	The function $g^+(\lambda)$ has the power series expansion 
	\be\label{eq:g expansion}
	g^+(\lambda) = \lambda(a^+\|V\|_1 + c\lambda)^{-1} = \frac{\lambda}{a^+\|V\|_1} - \frac{c\lambda^2}{(a^+\|V\|_1)^2} + O_2(\lambda^3),
	\ee
	which allows us to write
	\begin{multline}\label{eq:two terms}
	R^+(H_0,\lambda^4) -g^+(\lambda) R^+(H_0,\lambda^4)vSvR^+(H_0,\lambda^4)\\
	=			R^+(H_0,\lambda^4)-\frac{\lambda}{a^+\|V\|_1} R^+(H_0,\lambda^4)vSvR^+(H_0,\lambda^4)\\
	+\frac{c\lambda^2}{(a^+\|V\|_1)^2} R^+(H_0,\lambda^4)vSvR^+(H_0,\lambda^4)+O_2(\lambda \la x\ra \la y \ra).
	\end{multline}
	The error term follows since Lemma~\ref{lem:RvA} implies that $\lambda R^+(H_0,\lambda^4)=O_2(\la x \ra \la y \ra )$ and $\lambda S = \Gamma_1$. 
	
	Now we take inventory of the order $\lambda^{-1}$, $\lambda^0$, and remainder terms in the rest of \eqref{eq:two terms}.  
	Recalling \eqref{eq:R0low} and Lemma~\ref{regular} we see that
	the only term of order $\lambda^{0}$ 
	arising from $\lambda^2 R^+(H_0,\lambda^4)vSvR^+(H_0,\lambda^4)$ comes from the projection $P$ in the upper-left corner of
	\eqref{def:S}. All other parts of $S$ introduce expressions $R^+(H_0,\lambda^4)vQ$ or $QvR^+(H_0,\lambda^4)$ which neutralizes the leading order part of $R^+(H_0,\lambda^4)$  by Lemma~\ref{lem:RvQ}.  More specifically, writing $\lambda R^+(H_0,\lambda^4)=a^++\lambda \mathcal  E_0(\lambda)$, we have
	\begin{multline}
	\lambda^2 R^+(H_0,\lambda^4)vSvR^+(H_0,\lambda^4)\\ 
	= (a^+)^2\mathbf 1 vSv\mathbf 1 + a^+\lambda( \mathcal E_0(\lambda)vSv\mathbf 1+\mathbf 1 vSv\mathcal E_0 (\lambda) ) + \lambda^2 \mathcal E_0(\lambda) vSv\mathcal E_0(\lambda) \\
	= (a^+)^2\|V\|_1\mathbf 1 + O_2(\lambda\la x\ra \la y\ra). \label{eq:lambda2vSv}
	\end{multline}
	We have used \eqref{def:S} and the fact that $Qv\mathbf 1=\mathbf 1vQ=0$ to see $\mathbf 1vSv\mathbf 1=\mathbf 1vPv\mathbf 1$, the identity $\mathbf 1vPv\mathbf 1 = \|V\|_1 \mathbf 1$, as well as Lemma~\ref{lem:RvQ}.
	
	The next term requires carrying out the expansion a little farther than in \eqref{eq:lambda2vSv} by writing $\mathcal E_0(\lambda)=G_0+\mathcal E_1(\lambda)$ in the expansion of the resolvent.
	\begin{multline*}
	\lambda R^+(H_0,\lambda^4)vSvR^+(H_0,\lambda^4)\\ 
	=	\lambda^{-1}(a^+)^2\mathbf 1 vSv\mathbf 1 + a^+ ( \mathcal E_0(\lambda)vSv\mathbf 1+\mathbf 1 vSv\mathcal E_0 (\lambda) ) + \lambda  \mathcal E_0(\lambda) vSv\mathcal E_0(\lambda) \\
	=  \lambda^{-1}(a^+)^2\|V\|_1\mathbf 1 + a^+(G_0vSv\mathbf 1 + \mathbf 1 vSvG_0) \\
	+ a^+\mathcal E_1(\lambda)vSv\mathbf 1  + a^+\mathbf 1vSv\mathcal E_1(\lambda)+ \lambda\mathcal E_0(\lambda)vSv\mathcal E_0(\lambda) .
	\end{multline*}
	Using \eqref{def:S}, $Qv\mathbf 1=0$, \eqref{eq:E0 bd}, \eqref{eq:E1 bd} and Lemmas \ref{lem:RvQ} and \ref{lem:E_1vQ} as needed, we conclude that
	\begin{multline} \label{eq:RvAvR term}
	\frac{\lambda}{a^+\|V\|_1}R^+(H_0,\lambda^4)vSvR^+(H_0,\lambda^4) = \frac{a^+\mathbf 1}{\lambda}+ \frac{G_0vSv\mathbf 1 + \mathbf 1vSvG_0}{\|V\|_1}  \\ + \frac{\mathcal E_1(\lambda) vPv\mathbf 1 + \mathbf 1 vPv\mathcal E_1(\lambda)}{\|V\|_1}
	+ O_2(\lambda\la x\ra \la y\ra).
	\end{multline}
	We note that by \eqref{eq:E1 bd} the $\mathcal E_1(\lambda)$ terms are order $O_2(\lambda \la x\ra^2 \la y\ra^2)$, which is too large to be included in the remainder.

	The free resolvent has the expansion $R^+(H_0,\lambda^4) = \frac{a^+\mathbf 1}{\lambda} + G_0 + \mathcal E_1(\lambda)$.  The leading-order term cancels immediately with its counterpart in \eqref{eq:RvAvR term} in their contribution to \eqref{eq:R born}.  The lemma, and Proposition~\ref{prop:low reg}, concludes with one last 
	estimate,
	\begin{equation}\label{eq:E1 diffs}
	\mathcal E_1(\lambda) - \frac{\mathcal E_1(\lambda) vPv\mathbf 1 + \mathbf 1vPv\mathcal E_1(\lambda)}{\|V\|_1} = O_2(\lambda\la x\ra \la y\ra).
	\end{equation}
	Using the facts that $Pv\mathbf 1 = v\mathbf 1$, and $\int v^2(z)\, dz=\|V\|_1$, along with \eqref{eq:E1 K}, the operator on the left side of \eqref{eq:E1 diffs} has kernel
	\begin{multline} \label{eqn:weird-cancellation}
	\frac{1}{\lambda}\bigg(K(\lambda|x-y|) - \int K(\lambda|x-z|)\frac{v^2(z)}{\|V\|_1}\,dz - \int K(\lambda|y-z|)\frac{v^2(z)}{\|V\|_1}\,dz \bigg) \\
	= \frac{K(\lambda|x-y|) - K(\lambda|x|) - K(\lambda|y|)}{\lambda} 
	- \int\frac{K(\lambda|x-z|) - K(\lambda|x|)}{\lambda}\frac{v^2(z)}{\|V\|_1}\,dz \\
	- \int \frac{K(\lambda|y-z|) - K(\lambda|y|)}{\lambda}\frac{v^2(z)}{\|V\|_1}\,dz.
	\end{multline}
	From the inequalities \eqref{eq:Kjbound1}, \eqref{eq:Kjbound2} for $K$ we can see that
	$$
	\Big|\mathcal{E}_1(\lambda) - \mathcal{E}_1(\lambda)\frac{v^2}{\|V\|_1}\mathbf{1} - \mathbf{1}\frac{v^2}{\|V\|_1}\mathcal{E}_1(\lambda) \Big| 
	\les \lambda\la x\ra \la y\ra  + \lambda \int (\la x\ra+\la y\ra) \frac{\la z\ra^2v^2(z)}{\|V\|_1}\,dz \les \lambda\la x\ra \la y\ra
	$$
	provided $|v(x)| \les \la x\ra^{-\frac52-}$.  
	Bounds for the first two derivatives can be computed from \eqref{eqn:weird-cancellation} by using the Leibniz rule formulas for $\frac{d}{d\lambda}\mathcal E_1(\lambda)$ and $\frac{d^2}{d\lambda^2}\mathcal E_1(\lambda)$ and the analogous properties of $K_1$ and $K_2$.
\end{proof}

Finally, we prove the main Proposition.

\begin{proof}[of Proposition~\ref{prop:low reg}]
	Inserting the results of Lemma~\ref{lem:RvA}, \ref{lem:leading terms}
	into \eqref{resid}, specifically the expansion \eqref{eq:RV exp reg} suffices to prove the desired result.

\end{proof}

We collect a couple of notes for future reference when handling the resonant case in the next section.  First, the cancellation of
order $\lambda^{-1}$ terms between $R^+(H_0,\lambda^4)$ and $R^+(H_0,\lambda^4)vg^+(\lambda)SvR^+(H_0,\lambda^4)$  takes place in the same manner because the projection in the upper left corner of $S$ in \eqref{def:S} is the same in either case.

The fact that the constant, order $\lambda^0$ terms, are the same for $R_V^+(\lambda^4)$ and $R_V^-(\lambda^4)$ in the resonant case is much more difficult to check by hand. However, note that the original definition of $R_V^\pm(\lambda^4) = R_V(\lambda^4 \pm i0)$
implies that $R_V^-(\lambda^4) = R_V^+((i\lambda)^4)$.  As a consequence, the power series coefficients for
$R_V^\pm(\lambda^4)$ should coincide for each term of order $\lambda^{4k}$ and for the constant term in particular.

\section{Weighted bounds when there is a resonance of the first kind}\label{sec:swave}

The main goal of this section is to establish the following 

\begin{prop}\label{prop:low res}
	If there is a resonance of the first kind at zero and $|V(x)|\les \la x\ra^{-\beta}$ for some $\beta>11$, then we have the following expansion for the perturbed resolvent in a sufficiently small neighborhood of $\lambda=0$:
	\be\label{eq:RV res}
	R_V^\pm(\lambda^4)(x,y) = \frac{a^\pm \|V\|_1}{\lambda} C_{-1}^\pm+C_0+ O_2(\lambda \la x\ra ^2\la y \ra^2).
	\ee
	Where $C_{-1}^{\pm}$ is an operator of rank at most four given by
	\begin{align*}
	C_{-1}^\pm =(G_0v + a^\pm\mathbf 1vF^\pm_L)D_1(vG_0 + a^\pm F^\pm_Rv\mathbf 1)
	\end{align*}
	where
	$F_R^\pm:= \frac{1}{a^\pm \|V\|_1}S+a_1^\pm  vG_1vQD_0Q ,$
	and $F_L^\pm:= \frac{1}{a^\pm \|V\|_1}S+a_1^\pm  QD_0QvG_1v $.
	In particular,
	\be\label{eq:Rv diff res}
	R_V^+-R_V^- = \frac{C_{-1}^+-C_{-1}^- }{\lambda}  + O_2(\lambda \la x\ra ^2\la y \ra^2).
	\ee
	
\end{prop}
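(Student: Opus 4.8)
The plan is to follow the same symmetric resolvent strategy used for the regular case, but now keep track of the $\lambda^{-1}$ singularity coming from the inversion of $M^\pm(\lambda)$ in the presence of $S_1$. The starting point is the identity \eqref{resid} together with the expansion \eqref{Mexp}, \eqref{M_0} of $M^\pm(\lambda)$. Since $\beta>11$ we have $v(x)\les \la x\ra^{-11/2-}$, which by Lemma~\ref{lem:M_exp} gives $M_0^\pm(\lambda)=a_1^\pm\lambda vG_1v+\Gamma_{1+\gamma}$ with $\gamma$ as large as we need (here $\gamma$ up to $2$ suffices). I would first invert $A^\pm(\lambda)+S_1$ using Lemma~\ref{regular} to get $(A^\pm(\lambda)+S_1)^{-1}=QD_0Q+g^\pm(\lambda)S$ with $D_0=(Q(T+S_1)Q)^{-1}$, and then run a Feshbach/Schur-complement reduction on the $S_1L^2$ subspace, exactly as in Section~7 of \cite{egt}. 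The point is that $M^\pm(\lambda)$ restricted to the complement of $S_1$ is invertible with a bounded inverse, so $(M^\pm(\lambda))^{-1}$ has a single pole whose residue is governed by the inverse of the Schur complement $S_1(M^\pm(\lambda))^{-1}_{\text{reg}}S_1$ on $S_1L^2$. Using $S_1v=0$, $S_1D_0=D_0S_1=S_1$, and the expansion \eqref{M_0} for $M_0^\pm$, this Schur complement is, to leading order, $\frac{\lambda}{a^\pm\|V\|_1}T_1$ where $T_1=S_1TPTS_1-\frac{\|V\|_1}{3(8\pi)^2}S_1vG_1vS_1$ is the invertible operator from Definition~\ref{def:restype}(ii); call its inverse $D_1:=T_1^{-1}$ on $S_1L^2$. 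This produces the $\lambda^{-1}$ term with coefficient built from $D_1$ sandwiched between the "left" and "right" factors $S_1(\text{stuff})$, and bookkeeping of the $P$, $QD_0Q$ and $vG_1v$ contributions in $M_0^\pm$ is what yields the operators $F_R^\pm=\frac{1}{a^\pm\|V\|_1}S+a_1^\pm vG_1vQD_0Q$ and $F_L^\pm=\frac{1}{a^\pm\|V\|_1}S+a_1^\pm QD_0QvG_1v$.

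The next step is to propagate this expansion of $(M^\pm(\lambda))^{-1}$ through \eqref{resid}. Writing $(M^\pm(\lambda))^{-1}=\frac{a^\pm\|V\|_1}{\lambda}\,vG_0\text{-adjusted residue} + (\text{regular part})$ schematically, one multiplies on the left by $R^\pm(H_0,\lambda^4)v$ and on the right by $vR^\pm(H_0,\lambda^4)$. The crucial cancellations are the same ones used in Section~\ref{sec:regular}: whenever a factor $Q$ meets $vR^\pm(H_0,\lambda^4)$ the leading $\frac{a^\pm}{\lambda}\mathbf 1$ is annihilated (Lemma~\ref{lem:RvQ}, \ref{lem:RvA}), and the delicate $\mathcal E_1$ cancellation \eqref{eq:E1 diffs} handles the remaining order-$\lambda$ error with weights $\la x\ra\la y\ra$. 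Here, however, because the residue is multiplied by $R^\pm(H_0,\lambda^4)v$ whose leading term $\frac{a^\pm}{\lambda}\mathbf 1$ does \emph{not} vanish against the $P$ (or $S$) part of $F^\pm$, one picks up the $a^\pm\mathbf 1vF^\pm_L$ contribution alongside the $G_0vF$-type contribution; combined with the analogous factor on the right, this assembles $C_{-1}^\pm=(G_0v+a^\pm\mathbf 1vF_L^\pm)D_1(vG_0+a^\pm F_R^\pm v\mathbf 1)$, manifestly of rank at most $\operatorname{rank}S_1\le 4$. The $\lambda^0$ term $C_0$ is collected from the cross-terms (it is the resonant analogue of Lemma~\ref{lem:leading terms}), and the remainder is controlled by \eqref{eq:E0 bd}--\eqref{eq:E2 bd}, the $\Gamma_{1+\gamma}$ bound on the error in $M_0^\pm$, and Lemmas \ref{lem:RvA}, \ref{lem:E_1vQ}; because the resonant factors $vG_1v$ carry $|x-y|^2\les\la x\ra^2\la y\ra^2$ weights, the remainder is $O_2(\lambda\la x\ra^2\la y\ra^2)$ rather than $O_2(\lambda\la x\ra\la y\ra)$, which is exactly why Theorem~\ref{thm:swave} needs weight $2$.

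Finally, \eqref{eq:Rv diff res} is immediate: subtract the $R_V^-$ expansion from the $R_V^+$ expansion. The $\frac{a^\pm\|V\|_1}{\lambda}C_{-1}^\pm$ terms combine into $\frac{1}{\lambda}(C_{-1}^+-C_{-1}^-)$ after absorbing the scalar $a^\pm\|V\|_1$ into the definition of $C_{-1}^\pm$ as written, the $C_0$ terms cancel (this uses the symmetry observation recorded at the end of Section~\ref{sec:regular}: $R_V^-(\lambda^4)=R_V^+((i\lambda)^4)$ forces the $\lambda^0$ coefficients to agree), and the error terms are each $O_2(\lambda\la x\ra^2\la y\ra^2)$. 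I expect the main obstacle to be the Feshbach reduction on $S_1L^2$: one must verify that the Schur complement really is $\frac{\lambda}{a^\pm\|V\|_1}(T_1+o(1))$ with $T_1$ the specific combination in Definition~\ref{def:restype}(ii), tracking exactly which pieces of $A^\pm(\lambda)^{-1}$ and $M_0^\pm(\lambda)$ survive the projections $S_1$ on both sides — in particular that $S_1(\text{cross terms involving }P)S_1$ reproduces $S_1TPTS_1$ and that the $a_1^\pm\lambda vG_1v$ piece of $M_0^\pm$ contributes the $-\frac{\|V\|_1}{3(8\pi)^2}S_1vG_1vS_1$ term — and that all subleading contributions to this $4\times4$-at-most block are genuinely $o(1)$ in the operator norm with two derivatives in $\lambda$.
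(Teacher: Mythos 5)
Your plan follows the paper's proof essentially step for step: the same symmetric resolvent identity, the same Jensen--Nenciu/Feshbach inversion of $M^\pm(\lambda)$ relative to $S_1$ producing $B^\pm(\lambda)^{-1}=\mp$-signed multiples of $\tfrac{a^\pm\|V\|_1}{\lambda}D_1$ plus regular terms with $D_1=S_1T_1^{-1}S_1$, and the same assembly of the order-$\lambda^{-1}$ residue into $C_{-1}^\pm=(G_0v+a^\pm\mathbf 1vF_L^\pm)D_1(vG_0+a^\pm F_R^\pm v\mathbf 1)$. The only divergence is that you dispose of the $\pm$-independence of $C_0$ via the symmetry $R_V^-(\lambda^4)=R_V^+((i\lambda)^4)$, whereas the paper's proof verifies it by hand through the algebraic identities $a^+a_1^+=a^-a_1^-$ and their consequences in the long expansion \eqref{eq:ugly expansion}; since the paper itself records that symmetry argument as a valid alternative at the end of Section~\ref{sec:regular}, this is an acceptable shortcut.
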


From this theorem we can write an explicit formulation of the time dependent operator $F_t$ in the statement of Theorem~\ref{thm:swave},
\be \label{Ft defn}
F_t=\frac{2}{\pi i} \int_0^\infty e^{-it\lambda^4} \lambda^3 \chi(\lambda) \frac{C_{-1}^+-C_{-1}^- }{\lambda}\, d\lambda
\ee
Since $D_1=S_1D_1S_1$, we can see that $C^\pm_{-1}$ is an operator of rank at most the rank of $S_1$.  By Remark~7.9 in \cite{egt}, in the case of a resonance of the first kind the rank of $S_1$ is at most four.
As in the case when zero is regular, we need to understand the resolvent as $\lambda\to 0$, hence we study the operators $M^\pm(\lambda)^{-1}$.

\begin{lemma}\label{lem:Minvexp} 
	If there is a resonance of the first kind at zero and if $v(x)\les \la x\ra^{-\frac{11}2-}$, then
	$$[M^\pm(\lambda) ]^{-1} =(M^\pm(\lambda)+S_1)^{-1}
	-\frac{a^\pm \|V\|_1 D_1}{\lambda}+\widetilde M_0^\pm +\lambda \widetilde M_1^\pm
	+\lambda^2 \widetilde M_2^\pm+\Gamma_{3}.$$
	Furthermore, $D_1$, $\widetilde M_0^\pm $, $\widetilde M_1^\pm$ and $\widetilde M_2^\pm$ are finite-rank operators.
\end{lemma}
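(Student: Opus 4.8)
The plan is to invert $M^\pm(\lambda)$ by a Schur-complement (Feshbach) reduction onto the finite-dimensional range of $S_1$, the same mechanism used to treat threshold resonances for the second order Schr\"odinger operator.

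First I would check that $M^\pm(\lambda)+S_1$ is invertible in a neighborhood of $\lambda=0$ and is a $\Gamma_0$ family. Writing $M^\pm(\lambda)+S_1=(A^\pm(\lambda)+S_1)+M_0^\pm(\lambda)$, Lemma~\ref{regular} gives $(A^\pm(\lambda)+S_1)^{-1}=QD_0Q+g^\pm(\lambda)S$, which is uniformly bounded, while $M_0^\pm(\lambda)=\Gamma_1$ has small norm, so a Neumann series produces $(M^\pm(\lambda)+S_1)^{-1}$. Next, since $M^\pm(\lambda)=(M^\pm(\lambda)+S_1)-S_1$ and $S_1$ is a projection, the Woodbury/Feshbach identity shows that $M^\pm(\lambda)$ is invertible precisely when $\mathfrak M^\pm(\lambda):=S_1-S_1(M^\pm(\lambda)+S_1)^{-1}S_1$ is invertible on $S_1L^2$, with
\[
M^\pm(\lambda)^{-1}=(M^\pm(\lambda)+S_1)^{-1}+(M^\pm(\lambda)+S_1)^{-1}S_1\,[\mathfrak M^\pm(\lambda)]^{-1}\,S_1(M^\pm(\lambda)+S_1)^{-1}.
\]
This already has the shape of the asserted formula, with $(M^\pm(\lambda)+S_1)^{-1}$ kept unexpanded; everything else must be extracted from the finite-rank factor $[\mathfrak M^\pm(\lambda)]^{-1}$ sandwiched between $(M^\pm(\lambda)+S_1)^{-1}S_1$ and its mirror-image factor $S_1(M^\pm(\lambda)+S_1)^{-1}$.

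The heart of the proof is the behavior of $\mathfrak M^\pm(\lambda)$ on $S_1L^2$. Using $S_1v=vS_1=0$ (hence $S_1M_0^\pm(\lambda)=M_0^\pm(\lambda)S_1=0$, since $M_0^\pm=v\mathcal E_1v$ by Lemma~\ref{lem:M_exp}), $S_1Q=S_1$, $S_1D_0=D_0S_1=S_1$, and the explicit form \eqref{def:S} of $S$ together with the identity $S_1SS_1=S_1TPTS_1=T_1$ (the $vG_1v$ summand of $T_1$ dropping out because $S_1v=0$), the Neumann expansion of $(M^\pm(\lambda)+S_1)^{-1}$ collapses so that
\[
\mathfrak M^\pm(\lambda)=-g^\pm(\lambda)\,T_1+\lambda^3W^\pm+\Gamma_4,
\]
with $W^\pm$ a fixed finite-rank operator; extracting the $\lambda^3$ term explicitly (rather than absorbing it into the error) uses the refined expansion $M_0^\pm(\lambda)=a_1^\pm\lambda\,vG_1v+\Gamma_3$ of Lemma~\ref{lem:M_exp}, and this is exactly the point where the hypothesis $\beta>11$, i.e. $v(x)\lesssim\la x\ra^{-\frac{11}{2}-}$, is needed. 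The resonance-of-the-first-kind assumption is that $T_1$ is invertible on $S_1L^2$; letting $D_1$ denote its inverse there (so $D_1=S_1D_1S_1$) and using $g^\pm(\lambda)^{-1}=\frac{a^\pm\|V\|_1}{\lambda}+c$, a short geometric expansion gives
\[
[\mathfrak M^\pm(\lambda)]^{-1}=-\frac{a^\pm\|V\|_1}{\lambda}D_1+\widetilde{\mathcal D}_0^\pm+\lambda\widetilde{\mathcal D}_1^\pm+\lambda^2\widetilde{\mathcal D}_2^\pm+\Gamma_3,
\]
all the $\widetilde{\mathcal D}_j^\pm$ finite-rank. Finally I would expand $(M^\pm(\lambda)+S_1)^{-1}S_1=S_1+\lambda L_1^\pm+\lambda^2L_2^\pm+\lambda^3L_3^\pm+\Gamma_4$ with finite-rank coefficients (the remainder improves to $\Gamma_4$ because $M_0^\pm S_1=0$ forces the first Neumann correction to carry an extra factor of $g^\pm$), substitute into the Woodbury identity, and collect powers of $\lambda$. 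The $\lambda^{-1}$ term is $S_1\bigl(-\frac{a^\pm\|V\|_1}{\lambda}D_1\bigr)S_1=-\frac{a^\pm\|V\|_1}{\lambda}D_1$; the $\lambda^0$, $\lambda^1$, $\lambda^2$ contributions assemble into $\widetilde M_0^\pm,\widetilde M_1^\pm,\widetilde M_2^\pm$, each finite-rank since it carries a factor of $S_1$ or $D_1$; everything left over is $\Gamma_3$.

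The main obstacle is entirely the error bookkeeping: one must track every Neumann and Taylor expansion to precisely the order at which the remainder becomes $\Gamma_3$, and verify that the $\lambda^{-1}$ singularity of $[\mathfrak M^\pm(\lambda)]^{-1}$ does not degrade this error class when multiplied against the $\Gamma_4$ remainders of $(M^\pm(\lambda)+S_1)^{-1}S_1$. This succeeds only because the orthogonality $S_1v=vS_1=0$ annihilates the dangerous leading terms at each stage, and because $\beta>11$ supplies one extra power of $\lambda$ (through $M_0^\pm=a_1^\pm\lambda vG_1v+\Gamma_3$) in exactly the right places.
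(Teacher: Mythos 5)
Your overall strategy is the paper's: both invert $M^\pm(\lambda)$ through the Jensen--Nenciu/Feshbach identity with the projection $S_1$, expand $(M^\pm(\lambda)+S_1)^{-1}$ by a Neumann series built on Lemma~\ref{regular}, and reduce everything to inverting $B^\pm(\lambda)=S_1-S_1(M^\pm(\lambda)+S_1)^{-1}S_1$ on the finite-dimensional space $S_1L^2$. However, there is a genuine error at the central algebraic step. You assert that $S_1M_0^\pm(\lambda)=M_0^\pm(\lambda)S_1=0$ ``since $S_1v=vS_1=0$.'' The identity $S_1v=0$ means that $S_1$ annihilates the \emph{function} $v$ (equivalently $S_1P=PS_1=0$); it does not mean that $S_1$ composed with the \emph{multiplication operator} by $v$ vanishes. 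The operator $S_1v\,\mathcal E_1(\lambda)\,v\,S_1$ has kernel $\iint S_1(x,z)v(z)\mathcal E_1(\lambda)(z,w)v(w)S_1(w,y)\,dz\,dw$, which is nonzero in general because $\mathcal E_1(\lambda)(z,w)$ is not constant in $z$ and $w$. In particular $S_1vG_1vS_1\neq 0$.

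This false identity propagates into two incorrect claims. First, you conclude $S_1SS_1=T_1$ ``with the $vG_1v$ summand of $T_1$ dropping out''; in fact $S_1SS_1=S_1TPTS_1$, and the $-\frac{\|V\|_1}{3(8\pi)^2}S_1vG_1vS_1$ piece of $T_1$ arises precisely from $S_1M_0^\pm(\lambda)S_1=a_1^\pm\lambda\, S_1vG_1vS_1+\Gamma_{1+\gamma}$ matched against the expansion of $g^\pm(\lambda)$. This is not cosmetic: $S_1TPTS_1=(PTS_1)^*(PTS_1)$ has rank at most one (the rank of $P$), so it cannot be invertible on a resonance space of dimension up to four; without the $vG_1v$ contribution your $D_1$ would be the inverse of the wrong, generically non-invertible operator, and the inversion of $B^\pm(\lambda)$ would fail. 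Second, the corrections to $-g^\pm(\lambda)T_1$ in $B^\pm(\lambda)$ genuinely begin at order $\lambda^2$ --- for example $(a_1^\pm)^2\lambda^2S_1vG_1vQD_0QvG_1vS_1$ and $a_1^\pm\lambda g^\pm(\lambda)S_1\big[vG_1vS+SvG_1v\big]S_1$ --- so your claimed form $-g^\pm(\lambda)T_1+\lambda^3W^\pm+\Gamma_4$, and likewise the improvement of the remainder of $(M^\pm(\lambda)+S_1)^{-1}S_1$ to $\Gamma_4$, do not hold. The structural conclusion you are after survives once these $\lambda^2$ terms are carried along explicitly, as in \eqref{S1M+S1S1} and \eqref{eq:Binv pm}, but the proof as written rests on an orthogonality that is not there.
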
 

Our expansions vary from those presented in \cite{egt} as we need further information on the second derivative for our goals.  We begin with the following lemma from \cite{JN}.
\begin{lemma}\cite[Lemma~2.1]{JN}\label{JNlemma}
	Let $M$ be a closed operator on a Hilbert space $\mathcal{H}$ and $S$ a projection. Suppose $M+S$ has a bounded
	inverse. Then $M$ has a bounded inverse if and only if
	$$
	B:=S-S(M+S)^{-1}S
	$$
	has a bounded inverse in $S\mathcal{H}$, and in this case
	\begin{align} \label{Minversegeneral}
	M^{-1}=(M+S)^{-1}+(M+S)^{-1}SB^{-1}S(M+S)^{-1}.
	\end{align}
\end{lemma}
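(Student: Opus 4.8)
The plan is to prove this as a Schur-complement (Feshbach) identity, which delivers the invertibility equivalence and the inversion formula \eqref{Minversegeneral} simultaneously. Write $A := M + S$, which has a bounded inverse by hypothesis, and let $\iota : S\mathcal H \hookrightarrow \mathcal H$ be the inclusion and $\pi : \mathcal H \to S\mathcal H$ the corestriction of $S$, so that $\iota\pi = S$ on $\mathcal H$ and $\pi\iota = I_{S\mathcal H}$. First I would assemble the block operator
$$
\mathcal A := \begin{pmatrix} A & \iota \\ \pi & I_{S\mathcal H}\end{pmatrix} \quad\text{on } \mathcal H \oplus S\mathcal H,
$$
whose entries are all bounded except for $A$, which carries the domain $\operatorname{dom}(M)$. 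The whole argument rests on computing its two Schur complements.

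Second, I would eliminate the bounded invertible lower-right block $I_{S\mathcal H}$: the associated Schur complement is $A - \iota\, I_{S\mathcal H}^{-1}\,\pi = A - S = M$, so the standard block-factorization criterion gives that $\mathcal A$ is invertible if and only if $M$ is. Third, I would instead eliminate the upper-left block $A$, which is invertible by hypothesis: the associated Schur complement on $S\mathcal H$ is $I_{S\mathcal H} - \pi A^{-1}\iota = S - S A^{-1} S = B$, so $\mathcal A$ is invertible if and only if $B$ is. Chaining these two equivalences yields the desired biconditional that $M$ is invertible if and only if $B$ is.

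For the inversion formula I would exploit the uniqueness of $\mathcal A^{-1}$: its $(1,1)$ block is one fixed operator that the two Schur decompositions express differently. Eliminating $I_{S\mathcal H}$ identifies this block as $M^{-1}$; eliminating $A$ and reading off the top-left entry of the block-inverse formula identifies it as $A^{-1} + A^{-1}\iota B^{-1}\pi A^{-1} = (M+S)^{-1} + (M+S)^{-1} S B^{-1} S (M+S)^{-1}$, using $\iota B^{-1}\pi = S B^{-1} S$. Equating the two expressions is exactly \eqref{Minversegeneral}.

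The main obstacle is the domain bookkeeping, since $M$ is only assumed closed and may be unbounded: I must check that $A$ and $M$ share the domain $\operatorname{dom}(M)$ (automatic because $S$ is bounded), that the operator on the right of \eqref{Minversegeneral} maps $\mathcal H$ into $\operatorname{dom}(M)$, and that the Schur factorizations of $\mathcal A$ are genuine operator identities on the relevant domains rather than formal matrix manipulations. A fully self-contained alternative that sidesteps the block formalism is a direct two-sided verification: for the ``if'' direction one checks $M\big(A^{-1} + A^{-1}SB^{-1}SA^{-1}\big) = I$ and its mirror image using $\widetilde B B^{-1} = B^{-1}\widetilde B = S$ for $\widetilde B := S - SA^{-1}S$; for the ``only if'' direction the second resolvent identity $A^{-1} = M^{-1} - M^{-1}SA^{-1} = M^{-1} - A^{-1}SM^{-1}$, sandwiched between copies of $S$, shows that $S + SM^{-1}S$ is a two-sided inverse of $B$ on $S\mathcal H$. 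I would keep the Schur-complement argument as the main line and record this verification as a check.
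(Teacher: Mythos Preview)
The paper does not supply a proof of this lemma at all: it is quoted verbatim from Jensen--Nenciu \cite[Lemma~2.1]{JN} and used as a black box to expand $M^\pm(\lambda)^{-1}$ in the resonant case. So there is no ``paper's own proof'' to compare against.

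Your Schur-complement argument is correct and is a clean way to package the result. The two eliminations of the block operator $\mathcal A$ give the biconditional, and reading off the $(1,1)$ entry of $\mathcal A^{-1}$ two ways gives \eqref{Minversegeneral}; the domain remarks you make (that $A=M+S$ and $M$ share $\operatorname{dom}(M)$ since $S$ is bounded, and that the right side of \eqref{Minversegeneral} lands in $\operatorname{dom}(M)$ because $A^{-1}$ does) are exactly the points that need attention when $M$ is unbounded. The direct two-sided verification you record at the end is essentially how Jensen and Nenciu themselves argue in \cite{JN}: they check algebraically that the right side of \eqref{Minversegeneral} is a two-sided inverse of $M$, and for the converse sandwich the resolvent identity by $S$. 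Your block-matrix approach is a genuinely different (and arguably more conceptual) route that explains \emph{why} the formula takes this shape, at the modest cost of having to justify the Schur factorizations on the correct domains; the direct verification is shorter but more opaque. Either is fine here.
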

We use this lemma with $M=M^{\pm}(\lambda)$ and $S=S_1$ to prove Lemma~\ref{lem:Minvexp}.  Then, we have $B^\pm(\lambda)=S_1-S_1(M^{\pm}(\lambda)+S_1)^{-1}S_1$ on $S_1L^2$.  We note that since $S_1$ is a finite rank projection, both $B^\pm(\lambda)$ and $B^\pm(\lambda)^{-1}$ are finite rank operators.  In the case of a resonance of the first kind, $S_1$ has rank at most four.

\begin{proof}[of Lemma~\ref{lem:Minvexp}]

	Using \eqref{M_0} and \eqref{M inverse regular}, via a longer Neumann series expansion than in \eqref{eq:Minv exp reg} when $S_1=0$, we obtain  
	\begin{multline}\label{M+S1ex2}
	(M^\pm (\lambda)+S_1)^{-1} = QD_0Q+g^\pm(\lambda)S -a_1^\pm\lambda QD_0QvG_1v QD_0Q \\
	-a_1^\pm\lambda g^\pm(\lambda)\big[QD_0QvG_1v S+SvG_1v  QD_0Q\big]+ (a_1^\pm)^2\lambda^2 QD_0Q (vG_1v  QD_0Q)^2 +\Gamma_{3},
	\end{multline}
	provided that $v(x)\les \la x\ra^{-\frac{11}2-}$.  Here we note that all the operators are $\lambda$ independent, and we carefully note any $\pm$ dependence.
	\begin{multline}\label{S1M+S1S1}
	S_1(M^\pm (\lambda)+S_1)^{-1}S_1 = S_1+ g^\pm (\lambda) T_1  - a_1^\pm \lambda g^\pm (\lambda) S_1vG_1vS_1 \\
	-a_1^\pm\lambda g^\pm  (\lambda)S_1\big[vG_1v S+SvG_1v \big]S_1+ (a_1^+)^2\lambda^2 S_1 vG_1v QD_0QvG_1v S_1  +\Gamma_{3}.
	\end{multline}
	Therefore 
	\begin{multline*}
	B^\pm (\lambda)= S_1- S_1(M ^\pm (\lambda)+S_1)^{-1}S_1 = -g^\pm (\lambda)T_1\\+  a_1^\pm\lambda g^\pm (\lambda) 
	S_1( vG_1v  + SvG_1v   +vG_1v S )S_1 
	-(a_1^\pm)^2 \lambda^2  S_1vG_1v QD_0QvG_1v S_1+S_1\Gamma_{3}S_1.
	\end{multline*}
	Recalling \eqref{eq:g expansion},
	a careful Neumann series computation yield the expansion
	\be \label{eq:Binv pm}
	B^{\pm}(\lambda)^{-1} = - \frac{a^\pm\|V\|_1}{\lambda} D_1 - c D_1 - a_1^\pm a^\pm D_1\tilde{B}_0 D_1 + (a_1^\pm a^\pm)^2   D_1 \tilde{B}_{0,1} D_1 + D_1\Gamma_1D_1.
	\ee
	Here the operators $\tilde {B}_0$ and $ \tilde{B}_{0,1}$ are independent of both $\lambda$ and the choice of $\pm$.
	Substituting \eqref{eq:Binv pm} and \eqref{M+S1ex2} into \eqref{Minversegeneral} finishes the proof.

\end{proof}

We are now ready to prove Proposition~\ref{prop:low res}.  

\begin{proof}[of Proposition~\ref{prop:low res}]
	By the symmetric resolvent identity, we need to understand the behavior of 
	\begin{multline*}
	R_V^\pm(\lambda^4)=R^\pm(H_0,\lambda^4) -R^\pm(H_0,\lambda^4) (M^\pm(\lambda)+S_1)^{-1} R^\pm(H_0,\lambda^4)\\ - R^\pm(H_0,\lambda^4)(M^\pm(\lambda)+S_1)^{-1} S_1 B^\pm(\lambda)^{-1}S_1 (M^\pm(\lambda)+S_1)^{-1} R^\pm(H_0,\lambda^4). 
	\end{multline*}
	The contribution of the first two terms, and the cancellation of their most singular $\lambda$ terms follow exactly as in the case when there is no resonance at zero, that is when $S_1=0$.  The expansion in Proposition~\ref{prop:low reg} implies that  
	$$
	R^\pm(H_0,\lambda^4) -R^\pm(H_0,\lambda^4)(M^+(\lambda)+S_1)^{-1} R^\pm(H_0,\lambda^4)=C_{0,1}+O_2(\lambda \la x\ra \la y \ra  ).
	$$  
	This follows since the leading terms of the expansion in $(M^\pm(\lambda)+S_1)^{-1}$ and $M^\pm(\lambda)^{-1}$, on which the delicate cancellation relies, are identical. Namely, by \eqref{S1M+S1S1}
	$$
	(M^\pm (\lambda)+S_1)^{-1} = QD_0Q+g^\pm(\lambda)S+Q\Gamma_1Q.
	$$
	The remaining terms in \eqref{M+S1ex2} is controlled using  that Lemma~\ref{lem:RvQ} implies that
	$$
	R^\pm (H_0,\lambda^4)vQ\Gamma_1QvR^\pm (H_0,\lambda^4)
	=O_2(\lambda \la x\ra \la y \ra ).
	$$
	For the remainder of the proof, we seek to establish that the order zero terms in \eqref{eq:RV res} are independent of the choice of `+' or `-'.	 For simplicity, we drop the superscripts and consider the `+' case unless otherwise noted.
	We write
	$$
	(M+S_1)^{-1}=QD_0Q+\lambda M_1+\lambda^2 M_2+\Gamma_3.
	$$
	Here  $M_1=\frac{1}{a\|V\|_1}S+a_1QD_0QvG_1vQD_0Q$ and a careful Neumann series computation yields
	\begin{multline}\label{eq:M2 pm}
	M_2=\frac{-c}{(a\|V\|_1)^2}S-\frac{a_1}{a\|V\|_1} QD_0QvG_1vS-\frac{a_1}{a\|V\|_1} SvG_1vQD_0Q\\
	+ a_1^2 QD_0QvG_1vQD_0QvG_1vQD_0Q .
	\end{multline}
	Since $B=[g(\lambda) T_1+O_2(\lambda^2)]$, and
	using \eqref{eq:g expansion}, as in \eqref{eq:Binv pm}
	we may write
	$$
	B^{-1}=\frac{1}{\lambda}B_{-1}+B_0+\lambda B_1,
	$$
	here $B_{-1}=a\|V\|_1D_1$ where $D_1=S_1T_{1}^{-1}S_1$ the operators are explicit in \eqref{eq:Binv pm}.
	
	Then, we have the following as expansion for sufficiently small $\lambda$:
	\begin{multline*}
	(M+S_1)^{-1}S_1B^{-1}S_1 (M+S_1)^{-1}\\
	=\bigg[ QD_0Q+\lambda M_1+\lambda^2 M_2 \bigg] \bigg[ \lambda^{-1}B_{-1}+B_0+\lambda B_1 \bigg]\bigg[ QD_0Q+\lambda M_1+\lambda^2 M_2 \bigg]\\
	=\lambda^{-1}QD_0QB_{-1}QD_0Q+\bigg[ QD_0QB_{-1}M_1+M_1B_{-1}QD_0Q+QD_0QB_0QD_0Q \bigg]\\
	+\lambda \bigg[ M_2B_{-1}QD_0Q+QD_0QB_{-1}M_2	+M_1B_0QD_0Q+QD_0QB_0M_1\\
	+M_1B_{-1}M_1+QD_0QB_1QD_0Q \bigg]\\
	+\lambda^2 \bigg[ M_1B_0M_1+M_1B_{-1}M_2+M_2 B_{-1}M_1+QD_0Q\Gamma_0+\Gamma_0 QD_0Q \bigg]+ \Gamma_{3}.
	\end{multline*}
	Using
	\[
	R (H_0, \lambda^4) = \frac{a  \mathbf{1}}{\lambda} + G_0 + a_1  \lambda G_1 + \mathcal E_2(\lambda),
	\]
	and that $\mathbf 1 v Q=Qv\mathbf 1=0$, we arrive at the following expansion
	\begin{multline}\label{eq:ugly expansion}
	R (H_0,\lambda^4)v(M+S_1)^{-1}S_1B^{-1}S_1 (M+S_1)^{-1}vR (H_0,\lambda^4)=\\ 
	\frac{1}{\lambda} \bigg[G_0vQD_0QB_{-1}QD_0QvG_0+ G_0vQD_0QB_{-1}M_1va\mathbf{1}\\
	+a\mathbf{1}vM_1B_{-1}QD_0QvG_0+ a\mathbf{1} vM_1B_{-1}M_1v a\mathbf{1} \bigg]\\
	+\bigg[ a_1 G_1vQD_0QB_{-1}QD_0QvG_0+a_1 G_0vQD_0QB_{-1}QD_0QvG_1
	+G_0vQD_0QB_{-1}M_1vG_0 \\
	+G_0vM_1B_{-1}QD_0QvG_0 +a_1G_1vQD_0QB_{-1}M_1va\mathbf{1}+a\mathbf{1}vM_1B_{-1}QD_0QvG_1a_1\\
	+G_0vQD_0QB_0QD_0QvG_0 
	+a\mathbf{1}v M_2B_{-1}QD_0QvG_0+G_0vQD_0QB_{-1}M_2va\mathbf{1}\\
	+a\mathbf{1}vM_1B_0QD_0QvG_0+G_0vQD_0QB_0M_1va\mathbf{1}
	+G_0v M_1B_{-1}M_1v a\mathbf{1}+a\mathbf{1} vM_1B_{-1}M_1vG_0\\
	+a\mathbf {1}vM_1B_0M_1va \mathbf{1}+a\mathbf {1}vM_1B_{-1}M_2va \mathbf{1}+a\mathbf {1}vM_2 B_{-1}M_1va\mathbf{1} 
	\bigg] 
	+O_2(\lambda \la x\ra^2 \la y\ra^2).
	\end{multline}
	Our goal is to show that the order $\lambda^0$ terms are all independent of the `+' or `-' in the resolvent.  Recalling \eqref{adef}, we see $a^+a_1^+=a^-a_1^-$, which allows us to conclude the following pairs from the above expansion are $\pm$ independent: $a_1B_{-1}$, $M_1B_{-1}$.  This also implies that $(aa_1)^2$, and hence the combinations $a_1B_{-1}M_1a$, and $aM_1$ are $\pm$ independent.
	Further, it follows from \eqref{eq:Binv pm} that $B_0$
	is $\pm$ independent.  We may also conclude that the combination $aM_1B_0$ is $\pm$ independent.
	Finally, using \eqref{eq:M2 pm} we may conclude that the combinations $a M_2 B_{-1}$ and $aM_2B_{-1}M_1a$
	are $\pm$ independent.
	
	These computations suffice to establish the claim that the order $\lambda^0$ terms cancel in the $+/-$ difference.  Using that $S_1QD_0Q=QD_0QS_1=S_1$ on the order $\lambda^{-1}$ operator in \eqref{eq:ugly expansion} yields the formula for $C_{-1}^\pm$ given in Proposition~\ref{prop:low res}.

\end{proof}

Proposition~\ref{prop:low res}, along with oscillatory integral bounds in Lemmas~\ref{lem:t+a4bound} and \ref{lem:t-34bound} yields the dispersive bound
$$
e^{-itH}P_{ac}(H)\chi(H)=A_{-1}\la t\ra^{-\f34}+ A_1 \la t\ra^{-\f54},
$$
where $A_{-1}$ is an operator from $L^1\to L^\infty$ and $A_1:L^{1,2}\to L^{\infty, -2}$.

\begin{rmk}\label{rmk:wtd}
	We note that the error term in Lemma~\ref{lem:Minvexp}, specifically \eqref{M+S1ex2} may be replaced by $\Gamma_{2+}$, which requires only that $v(x)\les \la x\ra^{-\f92-}$.  This filters through the argument, allowing one to replace the error terms $\Gamma_k$ with $\Gamma_{k-1+}$, and the error term in \eqref{eq:ugly expansion} is now $O_2(\lambda^{0+} \la x\ra^2\la y \ra ^2)$.
	This yields the expansion
	$$
	e^{-itH}P_{ac}(H)\chi(H)=A_{-1}\la t\ra^{-\f34}+ A_1 \la t\ra^{-1-},
	$$
	where $A_{-1}$ is an operator from $L^1\to L^\infty$ and $A_1:L^{1,2}\to L^{\infty, -2}$ requiring only $|V(x)|\les \la x\ra^{-\beta}$ for $\beta>9$.
	
\end{rmk}

\section{High Energy}\label{sec:high}

To complete the proofs of Theorem~\ref{thm:reg} and \ref{thm:swave}, we need to control the contribution of the high energy portion, $\lambda \gtrsim 1$, in \eqref{stone}.  The effect of a zero energy resonance is only felt in a neighborhood of zero.  Hence, we need only prove one bound for the high energy.

\begin{prop} \label{prop:large}Let $|V(x) | \les \la x \ra^{-5-}$, and assume there are no embedded eigenvalues in the continuous spectrum of $H$, then  for $|t|>1$
	\begin{align} \label{eq:large}
	\Bigg|\int_0^\infty e^{-it\lambda^4} \lambda^3 \widetilde{\chi}(\lambda) R_V^{\pm} (\lambda^4)  (x,y) \, d\lambda \Bigg|\les | t |^{-2}\la x\ra \la y \ra.
	\end{align}
\end{prop}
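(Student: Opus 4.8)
The plan is to peel off the free evolution via the symmetric resolvent identity and then reduce everything to a single oscillatory-integral template handled by integrating by parts twice in $\lambda$. With $U=\mathrm{sign}(V)$, $v=|V|^{1/2}$, and $M^\pm(\lambda)=U+vR^\pm(H_0,\lambda^4)v$, the identity \eqref{resid} gives
\[
R_V^\pm(\lambda^4)=R^\pm(H_0,\lambda^4)-R^\pm(H_0,\lambda^4)\,v\,(M^\pm(\lambda))^{-1}\,v\,R^\pm(H_0,\lambda^4),
\]
so it suffices to prove \eqref{eq:large} for the free resolvent $R^\pm(H_0,\lambda^4)$ and for the sandwiched correction separately.

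First I would record the pointwise bounds on the free fourth-order resolvent valid for $\lambda\gtrsim 1$. From \eqref{eq:R0lambda} one has $R^\pm(H_0,\lambda^4)(x,y)=\lambda^{-1}\phi^\pm(\lambda|x-y|)$, where $\phi^\pm(s)=(8\pi s)^{-1}(e^{\pm is}-e^{-s})$ is smooth on $[0,\infty)$ with $|\phi^\pm(s)|+|(\phi^\pm)'(s)|\les\langle s\rangle^{-1}$ and $|s^2(\phi^\pm)''(s)|\les s$. Differentiating and using these facts yields, for $\lambda\gtrsim 1$,
\[
|R^\pm(H_0,\lambda^4)(x,y)|\les\lambda^{-1},\quad |\partial_\lambda R^\pm(H_0,\lambda^4)(x,y)|\les\lambda^{-2},\quad |\partial_\lambda^2 R^\pm(H_0,\lambda^4)(x,y)|\les\lambda^{-2}\langle x\rangle\langle y\rangle;
\]
the mechanism is that a $\lambda$-derivative on the oscillatory factor $e^{\pm i\lambda|x-y|}$ brings down a power of $|x-y|$, but the $|x-y|^{-1}$ in the kernel absorbs one such power, so two derivatives cost only the single factor $|x-y|\les\langle x\rangle\langle y\rangle$. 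Together with the hypothesis (which gives $v(x)\les\langle x\rangle^{-5/2-}$) the same pointwise bounds yield $\|vR^\pm(H_0,\lambda^4)(x,\cdot)\|_{L^2}\les\lambda^{-1}$ uniformly in $x$, $\|v\partial_\lambda R^\pm(H_0,\lambda^4)(x,\cdot)\|_{L^2}\les\lambda^{-2}$, $\|v\partial_\lambda^2 R^\pm(H_0,\lambda^4)(x,\cdot)\|_{L^2}\les\lambda^{-2}\langle x\rangle$, and $\|v\,\partial_\lambda^j R^\pm(H_0,\lambda^4)\,v\|_{L^2\to L^2}\les\lambda^{-2}$ for $j=0,1,2$. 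The oscillatory-integral template is then: using $e^{-it\lambda^4}\lambda^3=-\tfrac1{4it}\partial_\lambda e^{-it\lambda^4}$ and integrating by parts twice (dividing by $\lambda^3$ before the second step; boundary terms vanish, at the lower endpoint since $\widetilde\chi$ and its derivatives vanish there, at infinity by the $\lambda$-decay),
\[
\int_0^\infty e^{-it\lambda^4}\lambda^3\widetilde\chi(\lambda)A(\lambda)\,d\lambda=\frac1{(4it)^2}\int_0^\infty e^{-it\lambda^4}\,\partial_\lambda\Big(\lambda^{-3}\,\partial_\lambda\big(\widetilde\chi(\lambda)A(\lambda)\big)\Big)\,d\lambda .
\]
With $A(\lambda)=R^\pm(H_0,\lambda^4)(x,y)$, the right-hand integrand is integrable in $\lambda$ with integral $\les\langle x\rangle\langle y\rangle$ (the only term that could grow in $x,y$ is $\lambda^{-3}\widetilde\chi\,\partial_\lambda^2 A\les\lambda^{-5}\langle x\rangle\langle y\rangle$; all $\widetilde\chi'$-terms are compactly supported away from $0$ and $\infty$), giving $\les|t|^{-2}\langle x\rangle\langle y\rangle$.

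For the sandwiched correction I apply the same template with $A(\lambda)=[R^\pm(H_0,\lambda^4)v(M^\pm(\lambda))^{-1}vR^\pm(H_0,\lambda^4)](x,y)$. Expanding $\partial_\lambda^2$ by the Leibniz rule distributes the two derivatives among the two outer free resolvents and $(M^\pm(\lambda))^{-1}$; I would bound the resulting kernel by Cauchy--Schwarz in the two intermediate variables, using the $L^2$-bounds above (so that a derivative landing on $R^\pm(H_0,\lambda^4)(x,\cdot)$ costs at most $\langle x\rangle$ and similarly on the $y$ side — this is exactly where $|V(x)|\les\langle x\rangle^{-5-}$ enters, to guarantee $\langle\cdot\rangle v\in L^2$), together with $\|(M^\pm(\lambda))^{-1}\|_{L^2\to L^2}\les 1$ and $\|\partial_\lambda^j(M^\pm(\lambda))^{-1}\|_{L^2\to L^2}\les 1$ for $j=1,2$ obtained from $\partial_\lambda(M^{-1})=-M^{-1}(\partial_\lambda M)M^{-1}$ and $\|v\partial_\lambda^j R^\pm(H_0,\lambda^4)v\|\les\lambda^{-2}$. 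The resulting $\lambda$-integrand is again integrable with integral $\les\langle x\rangle\langle y\rangle$, so this term is also $\les|t|^{-2}\langle x\rangle\langle y\rangle$.

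The step I expect to be the main obstacle is the uniform bound $\sup_{\lambda\in\mathrm{supp}\,\widetilde\chi}\|(M^\pm(\lambda))^{-1}\|_{L^2\to L^2}<\infty$. For $\lambda$ large this follows from a Neumann series, since $\|vR^\pm(H_0,\lambda^4)v\|_{L^2\to L^2}\les\lambda^{-2}\to0$. On a compact interval $[\lambda_0,\Lambda]$, $vR^\pm(H_0,\lambda^4)v$ is Hilbert--Schmidt and norm-continuous in $\lambda$, so $M^\pm(\lambda)=U+vR^\pm(H_0,\lambda^4)v$ is a norm-continuous family of index-zero Fredholm operators; invertibility at each fixed $\lambda>0$ follows from the limiting absorption principle for $H$ (which holds under the no-embedded-eigenvalue hypothesis, cf. \cite{fsy}), because a nonzero element of $\ker M^\pm(\lambda)$ would produce an $L^2$-eigenfunction of $H$ at the positive energy $\lambda^4$. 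Compactness of $[\lambda_0,\Lambda]$ then upgrades pointwise invertibility to a uniform bound, and the derivative bounds follow from the formula for $\partial_\lambda(M^{-1})$. The remaining care is purely in the weight bookkeeping through the two integrations by parts, where the decay exponent $\beta>5$ is calibrated exactly so that two $\lambda$-derivatives produce no more than $\langle x\rangle\langle y\rangle$.
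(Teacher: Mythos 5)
Your proposal is correct in its conclusion and in the oscillatory-integral framework (two integrations by parts via $e^{-it\lambda^4}\lambda^3=-\tfrac1{4it}\partial_\lambda e^{-it\lambda^4}$, with the weights $\la x\ra\la y\ra$ entering only when both derivatives land on a single free resolvent), which is exactly the paper's \eqref{eq:hi ibp}. Where you genuinely diverge is in the treatment of the perturbed part. The paper expands $R_V$ by the finite Born series $R_V=R_0-R_0VR_0+R_0VR_VVR_0$ and controls $\partial_\lambda^k$ of the middle factor $VR_VV$ by citing the weighted-$L^2$ limiting absorption principle with derivatives (Theorem~2.23 of \cite{fsy}), which is where the hypothesis $\beta>5$ and the absence of embedded eigenvalues enter. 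You instead keep the symmetric resolvent identity $R_V=R_0-R_0v(M^\pm)^{-1}vR_0$ and prove uniform bounds on $(M^\pm(\lambda))^{-1}$ and its first two $\lambda$-derivatives on $\mathrm{supp}\,\widetilde\chi$ yourself: Neumann series for large $\lambda$ using $\|v\partial_\lambda^jR_0v\|\les\lambda^{-2}$, and Fredholm theory plus norm-continuity plus compactness on bounded intervals. Your route is more self-contained and avoids importing the derivative LAP for $R_V$, at the price of one step you state but do not prove: that a nonzero $\phi\in\ker M^\pm(\lambda)$ produces an $L^2$ eigenfunction of $H$ at energy $\lambda^4$. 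The function $\psi=-R^\pm(H_0,\lambda^4)v\phi$ is a priori only in $L^{2,-\frac12-}$, and upgrading it to $L^2$ requires the Agmon-type bootstrap (showing $\widehat{v\phi}$ vanishes on the sphere $|\xi|=\lambda$ so that the oscillatory part of the resolvent gains decay; the $R_0(-\lambda^2)$ piece is harmless). This is standard and is in substance the same input the paper obtains from \cite{fsy}, so it is a citation to be supplied rather than a gap in the logic; your weight bookkeeping, the vanishing of boundary terms, and the $\lambda^{-2}\la x\ra\la y\ra$ count on $\partial_\lambda^2$ of each summand all check out.
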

Recall that $\widetilde\chi(\lambda)=1-\chi(\lambda)$ is a cut-off away from a small neighborhood of $\lambda=0$.
In Proposition~6.1 of \cite{egt}, the uniform bound:
\begin{align} \label{eq:large unwtd}
\sup_{x,y\in \mathbb R^3}\Bigg|\int_0^\infty e^{-it\lambda^4} \lambda^3 \widetilde{\chi}(\lambda) R_V^{\pm} (\lambda^4)  (x,y) \, d\lambda \Bigg|\les | t |^{-\frac34}
\end{align}
was established.  This bound holds true under the hypotheses of Proposition~\ref{prop:large}.  The bound in \eqref{eq:large} decays faster for large $t$ at the cost of spatial weights.  This bound can be established directly by integration by parts. Using the support condition of $\widetilde \chi(\lambda)$ and the decay of $R_V^\pm(\lambda^4)$ as $\lambda\to \infty$ established below, there will be no boundary terms and it suffices to control
\be \label{eq:hi ibp}
\Bigg|\int_0^\infty e^{-it\lambda^4} \lambda^3 \widetilde{\chi}(\lambda) R_V^{\pm} (\lambda^4)  (x,y) \, d\lambda \Bigg|\les \frac{1}{t^2}\int_0^\infty \Bigg| \partial_{\lambda} \bigg(  \frac{\partial_{\lambda} [\widetilde{\chi}(\lambda) R_V^{\pm} (\lambda^4)  (x,y) ]}{\lambda^3}\bigg)\Bigg|\, d\lambda.
\ee
To prove the Proposition~\ref{prop:large} we use the resolvent identities and write, 
\begin{align}\label{large symmetric}
R_V(\lambda^4)= R^\pm(H_0, \lambda^4)  - R^\pm(H_0, \lambda^4) VR^\pm (H_0, \lambda^4) + R^\pm(H_0, \lambda^4) VR_V(\lambda^4) V R^\pm (H_0, \lambda^4).
\end{align} 
Recall by \eqref{eq:R0low}, we have 
\begin{align} \label{est1}
R^\pm(H_0, \lambda^4)  (x,y)  =O_1(\lambda^{-1}), \qquad
\partial_{\lambda}^2R^\pm(H_0, \lambda^4)  (x,y)  =O(\lambda^{-2}\la x\ra \la y \ra).
\end{align}
This, along with the fact that $\lambda\gtrsim 1$, shows that 
\begin{align}\label{est2}
\partial_{\lambda}^k[R^\pm (H_0, \lambda^4)VR^\pm(H_0, \lambda^4)]=O(\lambda^{-2}\la x\ra \la y \ra), \qquad k=0,1,2.
\end{align}
This suffices to establish the desired bound for the contribution of the first summand of \eqref{large symmetric} to \eqref{eq:hi ibp}.  In fact, the spatial weights are only necessary when $k=2$.  Furthermore, these bounds suffice to control the second summand of \eqref{large symmetric} to \eqref{eq:hi ibp} as they imply (for $k=0,1,2$)
\begin{align*}
&|\partial_{\lambda}^k[R^\pm (H_0, \lambda^4)VR^\pm(H_0, \lambda^4)(x,y)]| \les \lambda^{-2}  \la x\ra \la y\ra\int_{\R^3} \la x_1\ra |V (x_1)| dx_1\les \lambda^{-2}  \la x\ra \la y\ra.
\end{align*}
The assumed decay on $V$ suffices to ensure the integral converges.

For the final summand in \eqref{large symmetric}, we utilize the limiting absorption principle.
\begin{theorem} \label{th:LAP}\cite[Theorem~2.23]{fsy} Let $|V(x)|\les \la x \ra ^{-k-1}$. Then for any $\sigma>k+1/2$, $\partial_z^k R_V(z) \in \mathcal{B}(L^{2,\sigma}(\R^d), L^{2,-\sigma}(\R^d))$ is continuous for $z \notin {0}\cup \Sigma$. Further, 
	\begin{align*} 
	\|\partial_z^k  R_V(z) \|_{L^{2,\sigma}(\R^d) \rar L^{2,-\sigma}(\R^d)} \les z^{-(3+3k)/4}. 
	\end{align*}
\end{theorem}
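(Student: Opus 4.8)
The plan is to obtain the fourth-order limiting absorption principle from Agmon's classical estimate for $-\Delta$ through the factorization \eqref{RH_0 rep}--\eqref{eq:4th resolvdef}, and then to transfer it from $H_0$ to $H=\Delta^2+V$ by a Fredholm argument based on the symmetric resolvent identity \eqref{resid}.

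\textbf{Free estimate.} First I would prove the bound for $V\equiv 0$. Setting $\mu=\sqrt z$ (so $\mu=\lambda^2$ when $z=\lambda^4$), the identity \eqref{eq:4th resolvdef} writes $R^\pm(H_0;z)=\frac{1}{2\mu}\big(R_0^\pm(\mu)-R_0(-\mu)\big)$, i.e. $\frac{1}{2\mu}$ times the difference of a radiating Schr\"odinger resolvent and the off-spectrum, $L^2$-bounded resolvent $R_0(-\mu)$. Agmon's theorem \cite{agmon} gives $\|\partial_\mu^j R_0^\pm(\mu)\|_{L^{2,\sigma}\to L^{2,-\sigma}}\les \la\mu\ra^{-1/2-j}$ for $\sigma>j+\frac12$. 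Applying the chain rule $\partial_z=\frac{1}{2\mu}\partial_\mu$ and tracking powers of $\mu$ (the apparent singularity at $\mu=0$ cancels, since both resolvents tend to $(-\Delta)^{-1}$) yields $\|\partial_z^k R^\pm(H_0;z)\|_{L^{2,\sigma}\to L^{2,-\sigma}}\les z^{-3/4-k}\les z^{-(3+3k)/4}$ whenever $\sigma>k+\frac12$. The weight threshold $\sigma>k+\frac12$ is dictated precisely by the most-differentiated free factor $\partial_\mu^k R_0^\pm$.

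\textbf{Transfer to $H$.} With $v=|V|^{1/2}$ and $U=\mathrm{sign}(V)$, the identity \eqref{resid} reduces everything to controlling $M^\pm(\lambda)^{-1}=(U+vR^\pm(H_0;\lambda^4)v)^{-1}$ as a bounded operator on the unweighted $L^2$, after which $R_V^\pm=R^\pm(H_0;z)-R^\pm(H_0;z)v\,M^\pm(\lambda)^{-1}\,vR^\pm(H_0;z)$ inherits the weighted mapping property by composition. The hypothesis $|V|\les\la x\ra^{-k-1}$ lets the weight $v$ absorb the gain demanded by the free estimate, so that $vR^\pm(H_0;z)v$ is a compact, real-analytic (for $z\in(0,\infty)\setminus\Sigma$) operator-valued function; at high energy $vR^\pm(H_0;z)v\to 0$ in norm, hence $M^\pm\to U$, which is boundedly invertible. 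Analytic Fredholm theory then produces a meromorphic inverse $M^\pm(\lambda)^{-1}$, with invertibility failing only where $\phi+vR^\pm(H_0;z)v\phi=0$ has a nontrivial $L^2$ solution.

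\textbf{Main obstacle.} The crux is to rule out such exceptional $z\in(0,\infty)\setminus\Sigma$; this is the one place the hypothesis of no embedded eigenvalues enters, and it is genuinely delicate for the fourth-order operator. A nontrivial $\phi$ produces $\psi=-R^\pm(H_0;z)v\phi\in L^{2,-\sigma}$ solving $H\psi=z\psi$, and one must bootstrap its decay. Because $\Delta^2-z=(-\Delta-\sqrt z)(-\Delta+\sqrt z)$, the factor $-\Delta+\sqrt z$ admits exponentially decaying (hence automatically $L^2$) solutions, exactly the mechanism allowing embedded eigenvalues even for smooth compactly supported potentials (cf. the discussion around \cite{soffernew}). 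A Rellich-type argument kills the radiating component of $\psi$, forcing $\psi\in L^2$ as a genuine eigenfunction at energy $z$; the assumed absence of embedded eigenvalues then gives $\psi\equiv 0$, hence $\phi\equiv0$, so $M^\pm(\lambda)^{-1}$ is bounded and continuous on $(0,\infty)\setminus\Sigma$. Finally, I would obtain the derivative bounds by differentiating the formula for $R_V^\pm$ with the Leibniz rule together with $\partial_z M^{-1}=-M^{-1}(\partial_z M)M^{-1}$ iterated to order $k$: every factor of $M^{-1}$ and its derivatives is uniformly bounded on $L^2$ while all the $z$-decay is supplied by the free resolvent factors, so the perturbed resolvent inherits the rate $z^{-(3+3k)/4}$ under the calibrated weight $\sigma>k+\frac12$.
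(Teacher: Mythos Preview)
The paper does not prove this theorem; it is quoted verbatim from \cite[Theorem~2.23]{fsy} and used as a black box in the high-energy analysis. So there is no proof in the paper to compare against.

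Your outline is the standard route (and essentially the one in \cite{fsy}): free LAP from Agmon via the factorization \eqref{eq:4th resolvdef}, then transfer to $H$ by the symmetric resolvent identity and analytic Fredholm, with the absence of embedded eigenvalues ruling out the exceptional set. The architecture is correct, including the bootstrap from a nontrivial $\phi$ to an $L^2$ eigenfunction via a Rellich-type argument on the radiating factor.

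There is one computational slip you should fix. Agmon's estimate for the second-order resolvent gives $\|\partial_\lambda^j R_0^\pm(\lambda^2)\|_{L^{2,\sigma}\to L^{2,-\sigma}}\les \lambda^{-1}$ for $\sigma>j+\tfrac12$ (each $\lambda$-derivative brings down $|x-y|$, which costs weight but gains no decay), so in the variable $\mu=\lambda^2$ one has $\|\partial_\mu^j R_0^\pm(\mu)\|\les \mu^{-(1+j)/2}$, not $\mu^{-1/2-j}$ as you wrote. Feeding this through $\partial_z=\tfrac{1}{2\mu}\partial_\mu$ and the prefactor $\tfrac{1}{2\mu}$ in \eqref{eq:4th resolvdef} yields exactly $\|\partial_z^k R^\pm(H_0;z)\|\les z^{-(3+3k)/4}$, not the stronger $z^{-3/4-k}$ you claim. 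Your final inequality is still true because $z^{-3/4-k}\leq z^{-(3+3k)/4}$ for large $z$, but the intermediate bound is not; the sharp rate is $z^{-(3+3k)/4}$ already at the free level.
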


Note that first by \eqref{est1}, and using that $L^\infty \subset L^{2,-\f32-}$, we have 
\begin{align}
\| [R^\pm (H_0, \lambda^4)] \|_{ L^1 \rightarrow L^{2,-\sigma} }= O_1(\lambda^{-1}), \,\,\,\ \sigma>3/2,
\end{align}
which implies the dual estimate as an operator from $L^{2,\sigma}\to L^\infty$.  
Hence, by Theorem~\ref{th:LAP} we have the following estimate
\begin{align*}
& | [R^\pm (H_0, \lambda^4) VR_V(\lambda^4) V R^\pm (H_0, \lambda^4)] | \\ 
&\les \|R^\pm (H_0, \lambda^4) \|_{ L^{2,\sigma} \rightarrow L^{\infty}} \| V R_V(\lambda^4) V \|_{ L^{2,- \sigma} \rightarrow L^{2,\sigma} } \| R^\pm (H_0, \lambda^4) \|_{ L^ {1} \rightarrow L^{2,-\sigma} } \les \lambda^{-2},
\end{align*}
provided $|V(x) | \les \la x \ra^{-2-}$.
Similarly, by \eqref{est1} and Theorem~\ref{th:LAP} (with $z=\lambda^4$) one obtains the same bound for $\partial_{\lambda} [R^\pm (H_0, \lambda^4) VR_V(\lambda^4) V R^\pm (H_0, \lambda^4)]$.  Finally,
\begin{align}\label{est3}
| \partial_{\lambda}^2 \{R^\pm(H_0, \lambda^4) VR_V(\lambda^4) V R^\pm(H_0, \lambda^4) \}| \les \lambda^{-2}\la x\ra \la y\ra 
\end{align}
provided $|V(x) | \les \la x \ra^{-5-}$. Here, we note that the extra decay on $V$ is needed when the derivatives fall on the perturbed resolvent $R_V$ so that $V$ maps $L^{2,-\f52-}\to L^{2,\f52+}$.  The weights arise from when both derivatives fall on a single free resolvent, \eqref{est1}.

\begin{proof}[of Proposition~\ref{prop:large}]
	Substituting \eqref{est1}, \eqref{est2} and \eqref{est3}
	into \eqref{large symmetric}
	we have shown that
	$$
	|\partial_\lambda ^k R_V(\lambda^4)| \les \lambda^{-2} \la x \ra  \la y \ra, \qquad k=0,1,2 .
	$$
	Hence, by \eqref{eq:hi ibp}, we have 
	$$
	\Bigg|\int_0^\infty e^{-it\lambda^4} \lambda^3 \widetilde{\chi}(\lambda) R_V^{\pm} (\lambda^4)  (x,y) \, d\lambda \Bigg|\les \frac{\la x \ra  \la y \ra  }{t^2}\int_1^\infty \lambda^{-5} \, d\lambda\les \frac{\la x \ra  \la y \ra  }{t^2}.
	$$
	
\end{proof}	

We are now ready to prove the main theorems.

\begin{proof}[of Theorems~\ref{thm:reg} and \ref{thm:swave}]
	Interpolating between the high energy bounds \eqref{eq:large} and \eqref{eq:large unwtd} suffice to show that the large energy portion of the solution satsifies the required bounds in either case.
	
	For the low energy portion when zero is regular, we note that Proposition~\ref{prop:low reg}, specifically \eqref{eq:reg Rv diff} may be inserted into the Stone's formula \eqref{stone}.  Lemma~\ref{lem:t+a4bound} provides the desired time decay.  When there is a resonance of the first kind at zero, we use \eqref{eq:Rv diff res} in Proposition~\ref{prop:low res} in the Stone's formula.  Now, the operator $F_t$ is controlled by Lemma~\ref{lem:t-34bound} and the remaining piece is controlled by Lemma~\ref{lem:t+a4bound}. 
	
\end{proof}

\section*{Acknowledgments}
The authors wish to thank the reviewers whose detailed and careful reports greatly improved the paper.

\end{document}